\documentclass{article}%
\usepackage[colorlinks=true, pdfstartview=FitV, linkcolor=blue,
citecolor=blue, urlcolor=blue]{hyperref}
\usepackage{graphicx,color}
\usepackage{amssymb}
\usepackage{amsfonts}
\usepackage{amsmath}
\usepackage{graphicx,color}
\usepackage{graphicx}%
\setcounter{MaxMatrixCols}{30}
\providecommand{\U}[1]{\protect\rule{.1in}{.1in}}
\newtheorem{theorem}{Theorem}

\newtheorem{corollary}[theorem]{Corollary}

\newtheorem{example}[theorem]{Example}

\newtheorem{lemma}[theorem]{Lemma}

\newtheorem{remark}[theorem]{Remark}

\newenvironment{proof}[1][Proof]{\noindent\textbf{#1.} }{\ \rule{0.5em}{0.5em}}
\textwidth 15cm \textheight 20cm
\oddsidemargin0.7cm
\begin{document}

\title{Extended Bernoulli and Stirling matrices and related combinatorial identities
\thanks{Accepted for publication in \textit{Linear Algebra and its
Applications}. DOI: 10.1016/j.laa.2013.11.031}}
\author{M\"{u}m\"{u}n Can\thanks{mcan@akdeniz.edu.tr} and M. Cihat Da\u{g}l\i
\ \thanks{mcihatdagli@akdeniz.edu.tr}\\Department of Mathematics, Akdeniz University, 07058-Antalya, Turkey}
\date{}
\maketitle

\begin{abstract}
In this paper we establish plenty of number theoretic and combinatoric
identities involving generalized Bernoulli and Stirling numbers of both kinds.
These formulas are deduced from Pascal type matrix representations of
Bernoulli and Stirling numbers. For this we define and factorize a modified
Pascal matrix corresponding to Bernoulli and Stirling cases. \bigskip

\textbf{Keywords:} Bernoulli polynomials; Stirling numbers; Hyperharmonic
numbers; Pascal matrix; Bernoulli matrix; Stirling matrix.\bigskip

\textbf{MSC 2000 :} 05A10, 05A19, 11B68, 11B73.

\end{abstract}

\section{Introduction}

Matrices and matrix theory are recently used in number theory and
combinatorics. In particular Pascal type lower-triangular matrices are studied
with Fibonacci, Bernoulli, Stirling and Pell numbers and other special numbers
sequences. Cheon and Kim \cite{6} factorized (generalized) Stirling matrices
by Pascal matrices and obtained some combinatorial identities.\ Zhang and Wang
\cite{7} gave product formulas for the Bernoulli matrix and established
several identities involving Fibonacci numbers, Bernoulli numbers and polynomials.

In this paper we employ matrices for degenerate Bernoulli polynomials and
generalized Stirling numbers. We define degenerate Bernoulli and generalized
Stirling matrices which generalize previous results and lead some new
combinatorial identities. Some of these identities can hardly be obtained by
classical ways such as by using generating functions or counting, however they
are easily come up via matrix representations after elementary matrix multiplication.

The summary by sections is as follows: In Section 2, we define Pascal
functional matrix which is a special case of Pascal functional matrices
defined in \cite{16,20} and factorize by the summation matrices. In Section 3,
we generalize Bernoulli matrix and investigate some properties. In Section 4,
we define two types generalized Stirling matrices and obtain relationships
between\textbf{\ }Bernoulli matrices and Stirling matrices of the second type.
Furthermore, degenerate Bernoulli and generalized Stirling matrices are
factorized by Pascal matrices and several identities are developed as a result
of matrix representations. In final section, we introduce some special cases
of the results obtained in Section 4.

Throughout this paper we assume that $i,$\ $j$\ and $n$\ are nonnegative
integers; $\mu,$\ $\lambda,$\ $w$\ and $x$\ are real or complex numbers.

\section{Pascal matrix}

We define the $n\times n$ Pascal functional matrix $\mathcal{P}_{n}\left[
\lambda,x\right]  $ by%
\[
\left(  \mathcal{P}_{n}\left[  \lambda,x\right]  \right)  _{i,j}=\left\{
\begin{array}
[c]{cl}%
\dbinom{i-1}{j-1}\left(  x|\lambda\right)  _{i-j} & \text{, if }i\geq j\geq1\\
0 & \text{, if }1\leq i<j,
\end{array}
\right.
\]
where $\left(  x|\lambda\right)  _{k}=x(x-\lambda)(x-2\lambda)\cdots
(x-(k-1)\lambda)$ with $\left(  x|\lambda\right)  _{0}=1.$ Since
\[
\left(  x+y|\lambda\right)  _{m}=\sum\limits_{k=0}^{m}\dbinom{m}{k}\left(
x|\lambda\right)  _{m-k}\left(  y|\lambda\right)  _{k},
\]
we have $\mathcal{P}_{n}\left[  \lambda,x+y\right]  =\mathcal{P}_{n}\left[
\lambda,x\right]  \mathcal{P}_{n}\left[  \lambda,y\right]  $ and
$\mathcal{P}_{n}^{-1}\left[  \lambda,x\right]  =\mathcal{P}_{n}\left[
\lambda,-x\right]  . $ The algebraic properties of Pascal matrices can be
found in \cite{5,21,9,16,2,22,20}. In fact, $\mathcal{P}_{n}\left[
-\lambda,x\right]  $ is the matrix $\mathcal{P}_{n,\lambda}\left[  x\right]  $
defined in \cite{5} and this matrix is a special case of the generalized
Pascal functional matrices defined in \cite{16,20}. So we will not discuss the
algebraic properties of this matrix. We will only focus on factorizing this
matrix by the summation matrices. For this\textbf{\ }purpose, let us define
the $n\times n$ matrices $\mathcal{R}_{n}\left[  \lambda,x\right]
\mathcal{=}\left[  R_{i,j}(\lambda,x)\right]  $ and $\mathcal{T}_{n}\left[
\lambda,x\right]  \mathcal{=}\left[  T_{i,j}(\lambda,x)\right]  $
$(i,j=1,2,...,n)$ by%
\[
R_{i,j}\left(  \lambda,x\right)  =\left\{
\begin{array}
[c]{cc}%
\dfrac{\left(  x|\lambda\right)  _{i-1}}{\left(  x-\lambda|\lambda\right)
_{j-1}} & \text{, if }i>j\\
1 & \text{, if }i=j\\
0 & \text{, if }i<j
\end{array}
\right.
\]
and%
\[
T_{i,j}\left(  \lambda,x\right)  =\left\{
\begin{array}
[c]{cc}%
(-1)^{i-j}\lambda^{i-j-1}\dfrac{(i-2)!}{\left(  j-1\right)  !}x & \text{, if
}i>j\\
1 & \text{, if }i=j\\
0 & \text{, if }i<j.
\end{array}
\right.
\]
Then, we have

\begin{theorem}%
\[
\mathcal{R}_{n}^{-1}\left[  \lambda,x\right]  \mathcal{=T}_{n}\left[
\lambda,x\right]  .
\]

\end{theorem}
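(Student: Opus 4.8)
The plan is to exploit the fact that both $\mathcal{R}_{n}\left[\lambda,x\right]$ and $\mathcal{T}_{n}\left[\lambda,x\right]$ are lower triangular with $1$'s on the diagonal, hence invertible, so a single one-sided check suffices: it is enough to prove $\mathcal{R}_{n}\left[\lambda,x\right]\mathcal{T}_{n}\left[\lambda,x\right]=I_{n}$, that is,
\[
\sum_{k=j}^{i}R_{i,k}\left(\lambda,x\right)T_{k,j}\left(\lambda,x\right)=\delta_{i,j}\qquad(i\geq j).
\]
The diagonal case $i=j$ gives $R_{i,i}T_{i,i}=1$ at once, and the first subdiagonal $i=j+1$ reduces to $R_{j+1,j}+T_{j+1,j}=x-x=0$, so the entire content is the vanishing of the sum for $i>j$.

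First I would record the closed form obtained by cancelling the common factors in the quotient defining $R_{i,k}$: for $i>k$ one has
\[
R_{i,k}\left(\lambda,x\right)=\frac{\left(x|\lambda\right)_{i-1}}{\left(x-\lambda|\lambda\right)_{k-1}}=x\left(x-k\lambda|\lambda\right)_{i-1-k}.
\]
From this form two one-index recurrences fall out, and these are the crux of the argument. For the rows of $\mathcal{R}_{n}$, dividing the two closed forms shows that for every $k\leq i-1$
\[
R_{i+1,k}\left(\lambda,x\right)=\bigl(x-(i-1)\lambda\bigr)R_{i,k}\left(\lambda,x\right),
\]
the decisive point being that the multiplier $x-(i-1)\lambda$ does \emph{not} depend on $k$; one also reads off $R_{i+1,i}\left(\lambda,x\right)=x$. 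For $\mathcal{T}_{n}$, the explicit formula gives
\[
T_{i+1,j}\left(\lambda,x\right)=-(i-1)\lambda\,T_{i,j}\left(\lambda,x\right)\qquad(i>j).
\]

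With these in hand I would fix $j$ and induct on $i$. Writing $f(i)=\sum_{k=j}^{i}R_{i,k}T_{k,j}$, the base case $i=j+1$ is the computation above. For the step, split off the terms $k=i+1$ and $k=i$ from $f(i+1)$ and apply the row recurrence to the remaining block $k\leq i-1$; since the multiplier is $k$-independent it factors out, turning the tail into $\bigl(x-(i-1)\lambda\bigr)\bigl(f(i)-T_{i,j}\bigr)$. Invoking the inductive hypothesis $f(i)=0$ together with $R_{i+1,i}=x$ collapses everything to $f(i+1)=T_{i+1,j}+(i-1)\lambda\,T_{i,j}$, which is exactly $0$ by the $\mathcal{T}$-recurrence. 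The main obstacle is precisely locating these two recurrences and checking that the $\mathcal{R}$-multiplier is independent of $k$; once that is seen the induction telescopes with no further computation, and since a one-sided inverse of a unit lower-triangular matrix is its inverse, $\mathcal{T}_{n}\left[\lambda,x\right]=\mathcal{R}_{n}^{-1}\left[\lambda,x\right]$ follows.
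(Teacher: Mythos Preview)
Your proof is correct and follows essentially the same route as the paper: both argue by induction on $i$ that $\sum_{k=j}^{i}R_{i,k}T_{k,j}=0$ for $i>j$, exploiting the factorization $(x|\lambda)_{m}=(x-(m-1)\lambda)(x|\lambda)_{m-1}$ to pull a $k$-independent factor out of the bulk of the sum and collapse it onto the inductive hypothesis. Your packaging of this as the pair of one-step recurrences $R_{i+1,k}=(x-(i-1)\lambda)R_{i,k}$ and $T_{i+1,j}=-(i-1)\lambda\,T_{i,j}$ is a slightly cleaner bookkeeping of the same computation the paper carries out explicitly.
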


\begin{proof}
We apply induction on $i$ to show that
\[
\left(  \mathcal{R}_{n}\left[  \lambda,x\right]  \mathcal{T}_{n}\left[
\lambda,x\right]  \right)  _{i,j}=\sum\limits_{k=j}^{i}R_{i,k}\left(
\lambda,x\right)  T_{k,j}\left(  \lambda,x\right)  =0
\]
when $i>j$. For $i=j+1$, the assertion is trivial. Suppose that the equation
is true for $i=m\geq j+2$, i.e.,
\begin{align*}
&  \frac{\left(  x|\lambda\right)  _{m-1}}{\left(  x-\lambda|\lambda\right)
_{j-1}}+\sum\limits_{k=j+1}^{m-1}\frac{\left(  x|\lambda\right)  _{m-1}%
}{\left(  x-\lambda|\lambda\right)  _{k-1}}\frac{(k-2)!}{\left(  j-1\right)
!}\left(  -1\right)  ^{k-j}\lambda^{k-j-1}x\\
&  \qquad\qquad\qquad\qquad\qquad\qquad\qquad+\frac{(m-2)!}{\left(
j-1\right)  !}\left(  -1\right)  ^{m-j}\lambda^{m-j-1}x=0.
\end{align*}
For $i=m+1,$ we have%
\begin{align*}
&  \frac{\left(  x|\lambda\right)  _{m}}{\left(  x-\lambda|\lambda\right)
_{j-1}}+\sum\limits_{k=j+1}^{m}\frac{\left(  x|\lambda\right)  _{m}}{\left(
x-\lambda|\lambda\right)  _{k-1}}\frac{(k-2)!}{\left(  j-1\right)
!}(-1)^{k-j}\lambda^{k-j-1}x\\
&  \qquad+\frac{(m-1)!}{\left(  j-1\right)  !}(-1)^{m+1-j}\lambda^{m-j}x\\
&  \quad=\frac{\left(  x|\lambda\right)  _{m}}{\left(  x-\lambda
|\lambda\right)  _{j-1}}+\sum\limits_{k=j+1}^{m-1}\frac{\left(  x|\lambda
\right)  _{m}}{\left(  x-\lambda|\lambda\right)  _{k-1}}\frac{(k-2)!}{\left(
j-1\right)  !}(-1)^{k-j}\lambda^{k-j-1}x\\
&  \qquad-\frac{\left(  x|\lambda\right)  _{m}}{\left(  x-\lambda
|\lambda\right)  _{m-1}}\frac{(m-2)!}{\left(  j-1\right)  !}\left(
-\lambda\right)  ^{m-j-1}x-\frac{(m-1)!}{\left(  j-1\right)  !}\left(
-\lambda\right)  ^{m-j}x
\end{align*}%
\begin{align*}
&  \quad=(x-\left(  m-1\right)  \lambda)\left\{  \sum\limits_{k=j+1}%
^{m-1}\frac{\left(  x|\lambda\right)  _{m-1}}{\left(  x-\lambda|\lambda
\right)  _{k-1}}\frac{(k-2)!}{\left(  j-1\right)  !}(-1)^{k-j}\lambda
^{k-j-1}x\right. \\
&  \qquad\qquad\qquad\qquad\qquad\left.  +\frac{\left(  x|\lambda\right)
_{m-1}}{\left(  x-\lambda|\lambda\right)  _{j-1}}+(-1)^{m-j}\frac
{(m-2)!}{\left(  j-1\right)  !}\lambda^{m-1-j}x\right\}  =0.
\end{align*}
Since
\[
\left(  \mathcal{R}_{n}\left[  \lambda,x\right]  \mathcal{T}_{n}\left[
\lambda,x\right]  \right)  _{i,j}=0
\]
for $i<j,$ and
\[
\left(  \mathcal{R}_{n}\left[  \lambda,x\right]  \mathcal{T}_{n}\left[
\lambda,x\right]  \right)  _{i,i}=R_{i,i}\left(  \lambda,x\right)
T_{i,i}\left(  \lambda,x\right)  =1,
\]
we get $\left(  \mathcal{R}_{n}\left[  \lambda,x\right]  \mathcal{T}%
_{n}\left[  \lambda,x\right]  \right)  _{i,j}=\delta_{i,j},$ where
$\delta_{i,j}$ is the Kronecker symbol.
\end{proof}

Furthermore, we need the $\left(  k+1\right)  \times\left(  k+1\right)  $
matrices
\[
\overline{\mathcal{P}}_{k}\left[  \lambda,x\right]  =\left[  1\right]
\oplus\mathcal{P}_{k}\left[  \lambda,x\right]  ,\text{ }k\geq1
\]
and the $n\times n$ matrices
\[
G_{k}\left[  \lambda,x\right]  =I_{n-k}\oplus\mathcal{R}_{k}\left[
\lambda,x\right]  ,\text{ }1\leq k\leq n-1\text{ and }G_{n}\left[
\lambda,x\right]  =\mathcal{R}_{n}\left[  \lambda,x\right]  ,
\]
where the notation $\oplus$ denotes the direct sum of two matrices and $I_{n}$
is the identity matrix of order $n$.

\begin{lemma}
\label{lem rp}For $k\geq1,$ we have%
\[
\mathcal{R}_{k}\left[  \lambda,x\right]  \overline{\mathcal{P}}_{k-1}\left[
\lambda,x\right]  =\mathcal{P}_{k}\left[  \lambda,x\right]  .
\]

\end{lemma}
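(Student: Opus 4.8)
The plan is to compute the $(i,j)$ entry of the product directly and reduce everything to a single binomial identity that I settle by induction on $i$, exactly in the spirit of the proof of Theorem~1. Because $\overline{\mathcal{P}}_{k-1}\left[\lambda,x\right]=[1]\oplus\mathcal{P}_{k-1}\left[\lambda,x\right]$, the first column is special: its only nonzero entry is the $(1,1)$ entry, so
\[
\left(\mathcal{R}_{k}\overline{\mathcal{P}}_{k-1}\right)_{i,1}=R_{i,1}(\lambda,x)=(x|\lambda)_{i-1},
\]
which is precisely $\left(\mathcal{P}_{k}\right)_{i,1}$. Thus I would dispose of the column $j=1$ at once and concentrate on $j\geq2$.

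For $j\geq2$ the $(l,j)$ entry of $\overline{\mathcal{P}}_{k-1}$ equals $\binom{l-2}{j-2}(x|\lambda)_{l-j}$ for $l\geq j$ and vanishes otherwise, so the product entry, which I denote $f(i,j)$, becomes
\[
f(i,j):=\left(\mathcal{R}_{k}\overline{\mathcal{P}}_{k-1}\right)_{i,j}=\sum_{l=j}^{i}R_{i,l}(\lambda,x)\binom{l-2}{j-2}(x|\lambda)_{l-j},
\]
with the convention $R_{i,i}=1$. For $i<j$ this sum is empty and for $i=j$ it collapses to $1$, matching $\mathcal{P}_{k}$ in both cases; hence the whole lemma reduces to showing, for $i>j\geq2$, that $f(i,j)=\binom{i-1}{j-1}(x|\lambda)_{i-j}$.

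To run the induction on $i$ I would first record two elementary facts about the entries of $\mathcal{R}_{k}$: the off-diagonal scaling $R_{i,l}(\lambda,x)=(x-(i-2)\lambda)R_{i-1,l}(\lambda,x)$ valid for $l\leq i-2$, which follows by cancelling the extra factor $(x-(i-2)\lambda)$ in $(x|\lambda)_{i-1}$, together with the boundary value $R_{i,i-1}(\lambda,x)=x$. Splitting off the terms $l=i$ and $l=i-1$ and applying the scaling to the remaining sum lets me replace $\sum_{l=j}^{i-2}R_{i,l}(\cdots)$ by $(x-(i-2)\lambda)$ times the sum $\sum_{l=j}^{i-2}R_{i-1,l}(\cdots)$, which is $f(i-1,j)$ with its diagonal term $l=i-1$ removed. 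The induction hypothesis $f(i-1,j)=\binom{i-2}{j-1}(x|\lambda)_{i-1-j}$ then applies, and, using the factorization $(x|\lambda)_{i-j}=(x-(i-j-1)\lambda)(x|\lambda)_{i-1-j}$ to pull out the common factor $(x|\lambda)_{i-1-j}$, the whole identity collapses to the purely scalar statement
\[
(x-(i-2)\lambda)\binom{i-3}{j-1}+x\binom{i-3}{j-2}+(x-(i-j-1)\lambda)\binom{i-2}{j-2}=\binom{i-1}{j-1}(x-(i-j-1)\lambda),
\]
where I have already simplified $\binom{i-2}{j-1}-\binom{i-3}{j-2}=\binom{i-3}{j-1}$ by Pascal's rule.

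The remaining work, which I expect to be the only real obstacle, is verifying this scalar statement, and the cleanest way is to match the coefficients of $x$ and of $\lambda$ separately. The coefficient of $x$ reduces to $\binom{i-3}{j-1}+\binom{i-3}{j-2}+\binom{i-2}{j-2}=\binom{i-1}{j-1}$ by two further applications of Pascal's rule, while the coefficient of $\lambda$ reduces, after factoring out $(i-j-1)$, to the identity $(i-2)\binom{i-3}{j-1}=(i-j-1)\binom{i-2}{j-1}$, which is immediate once both sides are written as quotients of factorials. Finally the base case $i=j+1$ is checked directly: there the sum has only the terms $l=j$ and $l=j+1$, giving $R_{j+1,j}+\binom{j-1}{j-2}(x|\lambda)_{1}=x+(j-1)x=jx=\binom{j}{j-1}(x|\lambda)_{1}$. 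This completes the induction and hence the lemma.
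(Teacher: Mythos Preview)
Your proof is correct and follows essentially the same approach as the paper: both compute the $(i,j)$-entry of the product and establish the resulting identity by induction on $i$, reducing the inductive step to an elementary binomial identity. The only difference is organizational---the paper first uses Pascal's rule to peel off the diagonal term (reducing (\ref{15}) to (\ref{16})) and then splits off a single summand in the induction, whereas you keep the full sum, split off the two top summands, and arrive at a slightly longer (but equivalent) scalar identity.
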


\begin{proof}
We must show that
\begin{equation}
\sum\limits_{r=j}^{i-1}\frac{\left(  x|\lambda\right)  _{i-1}}{\left(
x-\lambda|\lambda\right)  _{r-1}}\dbinom{r-2}{j-2}\left(  x|\lambda\right)
_{r-j}+\dbinom{i-2}{j-2}\left(  x|\lambda\right)  _{i-j}=\dbinom{i-1}%
{j-1}\left(  x|\lambda\right)  _{i-j}, \label{15}%
\end{equation}
for $i\geq j$, since the left-hand side of (\ref{15}) is the $(i,j)$-entry of
the matrix $\mathcal{R}_{k}\left[  \lambda,x\right]  \overline{\mathcal{P}%
}_{k-1}\left[  \lambda,x\right]  .$ Again, we apply induction on $i.$ For
$i=j$ , the assertion is clear. From the known property $\dbinom{i-1}%
{j-1}+\dbinom{i-1}{j}=\dbinom{i}{j},$ it is enough to show that
\begin{equation}
\left(  x|\lambda\right)  _{i-1}\sum\limits_{r=j}^{i-1}\dbinom{r-2}{j-2}%
\frac{\left(  x|\lambda\right)  _{r-j}}{\left(  x-\lambda|\lambda\right)
_{r-1}}=\dbinom{i-2}{j-1}\left(  x|\lambda\right)  _{i-j} \label{16}%
\end{equation}
\ for $i\geq j+1.$ Suppose that (\ref{16}) is true for $i=m>j.$ For $i=m+1,$
we have
\begin{align*}
&  \left(  x|\lambda\right)  _{m}\sum\limits_{r=j}^{m}\dbinom{r-2}{j-2}%
\frac{\left(  x|\lambda\right)  _{r-j}}{\left(  x-\lambda|\lambda\right)
_{r-1}}\\
&  \quad=\left(  x|\lambda\right)  _{m}\sum\limits_{r=j}^{m-1}\dbinom
{r-2}{j-2}\frac{\left(  x|\lambda\right)  _{r-j}}{\left(  x-\lambda
|\lambda\right)  _{r-1}}+\dbinom{m-2}{j-2}\frac{\left(  x|\lambda\right)
_{m}}{\left(  x-\lambda|\lambda\right)  _{m-1}}\left(  x|\lambda\right)
_{m-j}\\
&  \quad=\left[  x-\left(  m-1\right)  \lambda\right]  \dbinom{m-2}%
{j-1}\left(  x|\lambda\right)  _{m-j}+x\dbinom{m-2}{j-2}\left(  x|\lambda
\right)  _{m-j}\\
&  \quad=\dbinom{m-1}{j-1}\left[  x-(m-j)\lambda\right]  \left(
x|\lambda\right)  _{m-j}=\dbinom{m-1}{j-1}\left(  x|\lambda\right)  _{m+1-j}.
\end{align*}
This completes the proof.
\end{proof}

From the definition of the matrices $G_{k}\left[  \lambda,x\right]  $ and
Lemma \ref{lem rp}, we have the following factorization of $\mathcal{P}%
_{n}\left[  \lambda,x\right]  ,$ which generalizes the result of Zhang
\cite[Theorem 1]{2}.

\begin{theorem}
\label{teo pg}%
\[
\mathcal{P}_{n}\left[  \lambda,x\right]  =G_{n}\left[  \lambda,x\right]
G_{n-1}\left[  \lambda,x\right]  \cdots G_{1}\left[  \lambda,x\right]  .
\]

\end{theorem}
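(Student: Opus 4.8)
The plan is to run an induction on $k$ that assembles $\mathcal{P}_n\left[\lambda,x\right]$ one block at a time, peeling the factors $G_k\left[\lambda,x\right]$ off from the left and invoking Lemma~\ref{lem rp} at each stage. To keep the block bookkeeping transparent, I would first introduce the auxiliary $n\times n$ matrices
\[
Q_k\left[\lambda,x\right]=I_{n-k}\oplus\mathcal{P}_k\left[\lambda,x\right],\qquad 0\le k\le n,
\]
so that $Q_0\left[\lambda,x\right]=I_n$, $Q_n\left[\lambda,x\right]=\mathcal{P}_n\left[\lambda,x\right]$, and the claim becomes $Q_n\left[\lambda,x\right]=G_n\left[\lambda,x\right]G_{n-1}\left[\lambda,x\right]\cdots G_1\left[\lambda,x\right]$.

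The key step is the one-block recursion $G_k\left[\lambda,x\right]Q_{k-1}\left[\lambda,x\right]=Q_k\left[\lambda,x\right]$. To establish it, I would rewrite the second factor by absorbing the leading entry of $\overline{\mathcal{P}}_{k-1}$ into the identity block, namely
\[
Q_{k-1}\left[\lambda,x\right]=I_{n-k+1}\oplus\mathcal{P}_{k-1}\left[\lambda,x\right]=I_{n-k}\oplus\overline{\mathcal{P}}_{k-1}\left[\lambda,x\right],
\]
so that both $G_k\left[\lambda,x\right]=I_{n-k}\oplus\mathcal{R}_k\left[\lambda,x\right]$ and $Q_{k-1}\left[\lambda,x\right]$ are block diagonal with a common leading block $I_{n-k}$ and trailing blocks of the same size $k$. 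The block-diagonal multiplication rule $(A_1\oplus B_1)(A_2\oplus B_2)=(A_1A_2)\oplus(B_1B_2)$ then gives
\[
G_k\left[\lambda,x\right]Q_{k-1}\left[\lambda,x\right]=I_{n-k}\oplus\bigl(\mathcal{R}_k\left[\lambda,x\right]\overline{\mathcal{P}}_{k-1}\left[\lambda,x\right]\bigr)=I_{n-k}\oplus\mathcal{P}_k\left[\lambda,x\right]=Q_k\left[\lambda,x\right],
\]
where the middle equality is precisely Lemma~\ref{lem rp}.

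With this recursion in hand I would conclude by telescoping (equivalently, a short induction): the base case $Q_0\left[\lambda,x\right]=I_n$ is immediate, and iterating the recursion yields
\[
Q_n\left[\lambda,x\right]=G_n\left[\lambda,x\right]Q_{n-1}\left[\lambda,x\right]=\cdots=G_n\left[\lambda,x\right]G_{n-1}\left[\lambda,x\right]\cdots G_1\left[\lambda,x\right]Q_0\left[\lambda,x\right],
\]
which is the asserted factorization since $Q_0\left[\lambda,x\right]=I_n$ and $Q_n\left[\lambda,x\right]=\mathcal{P}_n\left[\lambda,x\right]$.

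I do not expect a genuine analytic obstacle, since Lemma~\ref{lem rp} already does all the combinatorial work; the points requiring care are purely structural. First, one must verify that the block sizes line up at every step, in particular that $\overline{\mathcal{P}}_{k-1}=[1]\oplus\mathcal{P}_{k-1}$ is $k\times k$ so that it matches $\mathcal{R}_k$, and that $I_{n-k}\oplus[1]=I_{n-k+1}$ when the leading $1$ is absorbed. Second, one must treat the boundary index $k=n$ correctly, where $G_n\left[\lambda,x\right]=\mathcal{R}_n\left[\lambda,x\right]$ carries no leading identity block (the block $I_{n-k}=I_0$ is empty); there the recursion reduces to $\mathcal{R}_n\left[\lambda,x\right]\overline{\mathcal{P}}_{n-1}\left[\lambda,x\right]=\mathcal{P}_n\left[\lambda,x\right]$, which is again Lemma~\ref{lem rp} with $k=n$.
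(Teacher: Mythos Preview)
Your proposal is correct and is essentially the same argument the paper intends: the paper does not write out a proof at all, merely stating that the factorization follows from the definition of $G_k\left[\lambda,x\right]$ together with Lemma~\ref{lem rp}, and your block-diagonal telescoping is exactly the way to make that sentence precise. Incidentally, your auxiliary matrices $Q_k\left[\lambda,x\right]=I_{n-k}\oplus\mathcal{P}_k\left[\lambda,x\right]$ coincide with matrices the paper itself introduces later (in Section~4.1), so even your notation matches.
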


\begin{example}
$G_{4}\left[  \lambda,x\right]  G_{3}\left[  \lambda,x\right]  G_{2}\left[
\lambda,x\right]  G_{1}\left[  \lambda,x\right]  \medskip$

$\hspace{-0.2in}=%
\begin{bmatrix}
1 & 0 & 0 & 0\\
x & 1 & 0 & 0\\
x^{2}-x\lambda & x & 1 & 0\\
x^{3}-3x^{2}\lambda+2x\lambda^{2} & x^{2}-2x\lambda & x & 1
\end{bmatrix}%
\begin{bmatrix}
1 & 0 & 0 & 0\\
0 & 1 & 0 & 0\\
0 & x & 1 & 0\\
0 & x^{2}-x\lambda & x & 1
\end{bmatrix}%
\begin{bmatrix}
1 & 0 & 0 & 0\\
0 & 1 & 0 & 0\\
0 & 0 & 1 & 0\\
0 & 0 & x & 1
\end{bmatrix}
\medskip$

$\hspace{-0.2in}=%
\begin{bmatrix}
1 & 0 & 0 & 0\\
x & 1 & 0 & 0\\
x^{2}-x\lambda & 2x & 1 & 0\\
x^{3}-3x^{2}\lambda+2x\lambda^{2} & 3x^{2}-3x\lambda & 3x & 1
\end{bmatrix}
=\mathcal{P}_{4}\left[  \lambda,x\right]  . \medskip$
\end{example}

\section{Degenerate Bernoulli matrices}

\subsection{Degenerate Bernoulli polynomials of the first kind}

The higher order degenerate Bernoulli polynomials of the first kind $\beta
_{m}^{(w)}(\lambda,x)$ are defined by means of the generating function
\cite{8}
\begin{equation}
\left(  \frac{t}{(1+\lambda t)^{1/\lambda}-1}\right)  ^{w}(1+\lambda
t)^{x/\lambda}=\sum_{m=0}^{\infty}\beta_{m}^{(w)}(\lambda,x)\frac{t^{m}}{m!}
\label{1}%
\end{equation}
for $\lambda\neq0.$ Clearly, $\beta_{m}^{(1)}(\lambda,x)=\beta_{m}(\lambda,x)$
and $\beta_{m}^{(1)}(\lambda,0)=\beta_{m}(\lambda)$ are the degenerate
Bernoulli polynomials and the degenerate Bernoulli numbers, respectively. The
first few of the degenerate Bernoulli polynomials are $\beta_{0}%
(\lambda,x)=1,$ $\beta_{1}(\lambda,x)=x+\frac{1}{2}\lambda-\frac{1}{2}%
,$\ $\beta_{2}(\lambda,x)=x^{2}-x-\frac{1}{6}\lambda^{2}+\frac{1}{6},$%
\ $\beta_{3}(\lambda,x)=x^{3}-\frac{3}{2}x^{2}+\frac{1}{2}x-\frac{3}{2}%
x^{2}\lambda+\frac{3}{2}x\lambda+\frac{1}{4}\lambda^{3}-\frac{1}{4}\lambda,$
$\beta_{4}(\lambda,x)=x^{4}-2x^{3}+x^{2}-4x^{3}\lambda+4x^{2}\lambda
^{2}+6x^{2}\lambda-4x\lambda^{2}-2x\lambda-\frac{19}{30}\lambda^{4}+\frac
{2}{3}\lambda^{2}-\frac{1}{30}.$ From (\ref{1}), we have
\begin{equation}
\beta_{m}^{(w+z)}(\lambda,x+y)=\sum\limits_{k=0}^{m}\dbinom{m}{k}\beta
_{k}^{(w)}(\lambda,x)\beta_{m-k}^{(z)}(\lambda,y) \label{0}%
\end{equation}
and
\[
\beta_{m}\left(  \lambda,x+y\right)  =\sum\limits_{k=0}^{m}\dbinom{m}{k}%
\beta_{k}(\lambda,x)\left(  y|\lambda\right)  _{m-k}%
\]
(cf. \cite[eq. (5.12)]{8}). Explicit formulas and recurrence relations of
(generalized) degenerate Bernoulli polynomials and numbers can be found in
\cite{1,4,8,3,13,15}. Divisibility properties (\cite{4,13,10,15}\textbf{) }and
symmetry relations (\cite{3a,12,15}) are demonstrated as well.

Let $\mathcal{B}_{n}^{(w)}\left[  \lambda,x\right]  $ be the $n\times n$
matrix defined by
\[
\left(  \mathcal{B}_{n}^{(w)}\left[  \lambda,x\right]  \right)  _{i,j}%
=\left\{
\begin{array}
[c]{cl}%
\dbinom{i-1}{j-1}\beta_{i-j}^{(w)}(\lambda,x) & \text{, if }i\geq j\geq1\\
0 & \text{, if }1\leq i<j,
\end{array}
\right.
\]
with the notations $\mathcal{B}_{n}^{(1)}\left[  \lambda,x\right]
=\mathcal{B}_{n}\left[  \lambda,x\right]  $ and $\mathcal{B}_{n}^{(1)}\left[
\lambda,0\right]  =\mathcal{B}_{n}\left[  \lambda\right]  $. It is clear that
$\mathcal{B}_{n}^{(0)}\left[  \lambda,x\right]  =\mathcal{P}_{n}\left[
\lambda,x\right]  .$

Since $\left(  1+\lambda t\right)  ^{1/\lambda}\rightarrow e^{t},$ as
$\lambda\rightarrow0$ it is evident that $\beta_{m}^{(w)}(0,x)=B_{m}^{(w)}(x)$
and $\beta_{m}^{\left(  w\right)  }(0,0)=B_{m}^{\left(  w\right)  },$ where
$B_{m}^{(w)}(x)$ are the higher order Bernoulli polynomials defined by
\[
\left(  \frac{t}{e^{t}-1}\right)  ^{w}e^{xt}=\sum_{m=0}^{\infty}B_{m}%
^{(w)}(x)\frac{t^{m}}{m!}.
\]
Hence, in the limiting case $\lambda=0,$ $\mathcal{B}_{n}^{(w)}\left[
0,x\right]  $ is the generalized Bernoulli matrix $\mathcal{B}_{n-1}%
^{(w)}\left[  x\right]  $ defined in \cite[p. 1623]{7}.

The following theorem can be easily seen from (\ref{0}).

\begin{theorem}%
\[
\mathcal{B}_{n}^{(w+z)}\left[  \lambda,x+y\right]  =\mathcal{B}_{n}%
^{(w)}\left[  \lambda,x\right]  \mathcal{B}_{n}^{(z)}\left[  \lambda,y\right]
=\mathcal{B}_{n}^{(z)}\left[  \lambda,x\right]  \mathcal{B}_{n}^{(w)}\left[
\lambda,y\right]  .
\]

\end{theorem}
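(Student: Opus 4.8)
The plan is to deduce the matrix identity directly from the convolution formula~(\ref{0}) for the higher order degenerate Bernoulli polynomials, which is exactly the algebraic fact that makes such matrix factorizations work. First I would write out the $(i,j)$-entry of the product $\mathcal{B}_{n}^{(w)}\left[\lambda,x\right]\mathcal{B}_{n}^{(z)}\left[\lambda,y\right]$ using the definition of matrix multiplication, namely
\[
\left(\mathcal{B}_{n}^{(w)}\left[\lambda,x\right]\mathcal{B}_{n}^{(z)}\left[\lambda,y\right]\right)_{i,j}=\sum_{k=1}^{n}\left(\mathcal{B}_{n}^{(w)}\left[\lambda,x\right]\right)_{i,k}\left(\mathcal{B}_{n}^{(z)}\left[\lambda,y\right]\right)_{k,j}.
\]
Since both matrices are lower triangular, the only nonzero terms occur when $j\leq k\leq i$, so for $i\geq j$ the sum reduces to a finite range and all other entries vanish (giving the correct zero pattern above the diagonal automatically).

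Next I would substitute the explicit binomial--Bernoulli entries, obtaining a sum of the form $\sum_{k=j}^{i}\binom{i-1}{k-1}\beta_{i-k}^{(w)}(\lambda,x)\binom{k-1}{j-1}\beta_{k-j}^{(z)}(\lambda,y)$. The key manipulation is to reindex via $\ell=k-j$ and use the standard binomial absorption identity $\binom{i-1}{k-1}\binom{k-1}{j-1}=\binom{i-1}{j-1}\binom{i-j}{k-j}$, which factors out the outer coefficient $\binom{i-1}{j-1}$ and leaves an inner convolution sum $\sum_{\ell=0}^{i-j}\binom{i-j}{\ell}\beta_{i-j-\ell}^{(w)}(\lambda,x)\beta_{\ell}^{(z)}(\lambda,y)$. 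By~(\ref{0}) with $m=i-j$ this inner sum collapses precisely to $\beta_{i-j}^{(w+z)}(\lambda,x+y)$, so the $(i,j)$-entry becomes $\binom{i-1}{j-1}\beta_{i-j}^{(w+z)}(\lambda,x+y)$, which is exactly the $(i,j)$-entry of $\mathcal{B}_{n}^{(w+z)}\left[\lambda,x+y\right]$. This establishes the first equality.

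Finally, for the second equality I would observe that the convolution~(\ref{0}) is symmetric under simultaneously swapping $(w,x)\leftrightarrow(z,y)$, since addition of the superscripts and of the arguments is commutative and the sum over $k$ can be reindexed by $k\mapsto m-k$. Hence both orderings of the product yield the same matrix $\mathcal{B}_{n}^{(w+z)}\left[\lambda,x+y\right]$, giving the stated double equality. The only step requiring genuine care is the binomial absorption and reindexing in the middle paragraph; everything else is bookkeeping on the triangular structure, which is why the author can reasonably call this ``easily seen.'' I would take care to verify the index ranges match so that~(\ref{0}) applies verbatim with $m=i-j$.
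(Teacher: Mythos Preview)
Your argument is correct and is precisely the computation the paper has in mind when it says the theorem ``can be easily seen from~(\ref{0})'': you simply expand the $(i,j)$-entry of the product, apply the binomial absorption $\binom{i-1}{k-1}\binom{k-1}{j-1}=\binom{i-1}{j-1}\binom{i-j}{k-j}$, and invoke~(\ref{0}) with $m=i-j$. The paper gives no further proof, so you have merely spelled out the implicit details.
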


By induction on $k,$ we have

\begin{corollary}
\label{cor bk}%
\[
\mathcal{B}_{n}^{(w_{1}+w_{2}+\cdots+w_{k})}\left[  \lambda,x_{1}+x_{2}%
+\cdots+x_{k}\right]  =\mathcal{B}_{n}^{(w_{1})}\left[  \lambda,x_{1}\right]
\mathcal{B}_{n}^{(w_{2})}\left[  \lambda,x_{2}\right]  \cdots\mathcal{B}%
_{n}^{(w_{k})}\left[  \lambda,x_{k}\right]  .
\]
In particular,%
\[
\left(  \mathcal{B}_{n}^{(w)}\left[  \lambda,x\right]  \right)  ^{k}%
=\mathcal{B}_{n}^{(kw)}\left[  \lambda,kx\right]  .
\]

\end{corollary}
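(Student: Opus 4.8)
The plan is to argue by induction on $k$, taking the preceding Theorem (the two-factor product formula for $\mathcal{B}_{n}$) as the base of the recursion. For $k=1$ both sides reduce to $\mathcal{B}_{n}^{(w_{1})}[\lambda,x_{1}]$, so the identity is trivial; the case $k=2$ is precisely the preceding Theorem with the substitution $(w,x,z,y)=(w_{1},x_{1},w_{2},x_{2})$.

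For the inductive step, suppose the product formula holds for some $k\geq 1$. Given $k+1$ parameters $(w_{1},\dots,w_{k+1})$ and $(x_{1},\dots,x_{k+1})$, I would first group the leftmost $k$ factors and apply the induction hypothesis to collapse them into the single matrix $\mathcal{B}_{n}^{(w_{1}+\cdots+w_{k})}[\lambda,x_{1}+\cdots+x_{k}]$; associativity of matrix multiplication justifies this regrouping. It then remains to multiply by the final factor $\mathcal{B}_{n}^{(w_{k+1})}[\lambda,x_{k+1}]$, and here the two-factor Theorem applies with $w=w_{1}+\cdots+w_{k}$, $z=w_{k+1}$, $x=x_{1}+\cdots+x_{k}$ and $y=x_{k+1}$, producing $\mathcal{B}_{n}^{(w_{1}+\cdots+w_{k+1})}[\lambda,x_{1}+\cdots+x_{k+1}]$. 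This is exactly the claimed identity for $k+1$ factors, which closes the induction.

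The special case then follows by specializing $w_{i}=w$ and $x_{i}=x$ for every $i$: the product of $k$ identical copies of $\mathcal{B}_{n}^{(w)}[\lambda,x]$ equals $\mathcal{B}_{n}^{(kw)}[\lambda,kx]$, which is the displayed consequence.

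I do not anticipate a genuine obstacle here, since the entire combinatorial content is already packaged inside the two-factor Theorem, whose own proof rests on the Vandermonde-type convolution (\ref{0}) for the higher order degenerate Bernoulli polynomials. The only points needing a modicum of care are purely formal: that the regrouping of factors is licensed by associativity, and that the commutativity recorded in the two-factor Theorem guarantees the product is independent of the order of the factors, so that the left-hand side of the corollary is unambiguous.
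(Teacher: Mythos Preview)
Your proposal is correct and matches the paper's approach exactly: the paper simply states ``By induction on $k$'' before the corollary, relying on the preceding two-factor Theorem, and you have written out precisely that induction in detail. The special case is handled identically.
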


In view of $\mathcal{B}_{n}^{(0)}\left[  \lambda,0\right]  =I_{n}$ and
$\mathcal{B}_{n}^{(0)}\left[  \lambda,x\right]  =\mathcal{P}_{n}\left[
\lambda,x\right]  $ we have

\begin{corollary}
\label{teo bp}%
\begin{equation}
\mathcal{B}_{n}^{\left(  w\right)  }\left[  \lambda,x+y\right]  =\mathcal{P}%
_{n}\left[  \lambda,x\right]  \mathcal{B}_{n}^{\left(  w\right)  }\left[
\lambda,y\right]  =\mathcal{B}_{n}^{\left(  w\right)  }\left[  \lambda
,y\right]  \mathcal{P}_{n}\left[  \lambda,x\right]  \label{bp}%
\end{equation}
and%
\[
\left(  \mathcal{B}_{n}^{(w)}\left[  \lambda,x\right]  \right)  ^{-1}%
=\mathcal{B}_{n}^{(-w)}\left[  \lambda,-x\right]  =\mathcal{P}_{n}\left[
\lambda,-x\right]  \mathcal{B}_{n}^{\left(  -w\right)  }\left[  \lambda
\right]  .
\]

\end{corollary}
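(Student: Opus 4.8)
The plan is to read off every assertion from the preceding multiplication theorem
$\mathcal{B}_{n}^{(w+z)}[\lambda,x+y]=\mathcal{B}_{n}^{(w)}[\lambda,x]\mathcal{B}_{n}^{(z)}[\lambda,y]=\mathcal{B}_{n}^{(z)}[\lambda,x]\mathcal{B}_{n}^{(w)}[\lambda,y]$
by specializing the parameters, using only the two facts already recorded in the text, namely $\mathcal{B}_{n}^{(0)}[\lambda,x]=\mathcal{P}_{n}[\lambda,x]$ and $\mathcal{B}_{n}^{(0)}[\lambda,0]=I_{n}$.

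First I would prove $(\ref{bp})$. Setting $z=0$ in the theorem and replacing each factor $\mathcal{B}_{n}^{(0)}[\lambda,\cdot]$ by $\mathcal{P}_{n}[\lambda,\cdot]$ turns the two-sided product into $\mathcal{B}_{n}^{(w)}[\lambda,x+y]=\mathcal{B}_{n}^{(w)}[\lambda,x]\mathcal{P}_{n}[\lambda,y]=\mathcal{P}_{n}[\lambda,x]\mathcal{B}_{n}^{(w)}[\lambda,y]$. The equality of the outer terms is exactly the first equality of $(\ref{bp})$. To obtain the second equality of $(\ref{bp})$ I interchange the names of $x$ and $y$ in $\mathcal{B}_{n}^{(w)}[\lambda,x+y]=\mathcal{B}_{n}^{(w)}[\lambda,x]\mathcal{P}_{n}[\lambda,y]$, which gives $\mathcal{B}_{n}^{(w)}[\lambda,x+y]=\mathcal{B}_{n}^{(w)}[\lambda,y]\mathcal{P}_{n}[\lambda,x]$. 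Thus both products in $(\ref{bp})$ are just the $z=0$ case of the theorem read in its two orderings, and in particular the commutativity of $\mathcal{P}_{n}[\lambda,x]$ with $\mathcal{B}_{n}^{(w)}[\lambda,y]$ requires no separate argument, both orderings equalling $\mathcal{B}_{n}^{(w)}[\lambda,x+y]$.

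Next I would derive the inverse formula. Taking $z=-w$ and $y=-x$ in the theorem collapses the left-hand side to $\mathcal{B}_{n}^{(0)}[\lambda,0]=I_{n}$, so that $\mathcal{B}_{n}^{(w)}[\lambda,x]\mathcal{B}_{n}^{(-w)}[\lambda,-x]=I_{n}$; since these are square matrices of equal size, this identifies the inverse and gives $(\mathcal{B}_{n}^{(w)}[\lambda,x])^{-1}=\mathcal{B}_{n}^{(-w)}[\lambda,-x]$. For the factored form I then invoke the freshly established $(\ref{bp})$ with $w$ replaced by $-w$, $x$ replaced by $-x$, and $y=0$, together with the notational convention $\mathcal{B}_{n}^{(-w)}[\lambda,0]=\mathcal{B}_{n}^{(-w)}[\lambda]$; this rewrites $\mathcal{B}_{n}^{(-w)}[\lambda,-x]$ as $\mathcal{P}_{n}[\lambda,-x]\mathcal{B}_{n}^{(-w)}[\lambda]$ and closes the chain of equalities.

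No genuine difficulty arises, since every step is a substitution into an identity already proved; the only care needed is clerical, in matching variable names so that both orderings surface in $(\ref{bp})$ and in tracking the signs when setting $z=-w$ and $y=-x$ for the inverse.
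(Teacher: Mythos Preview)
Your proposal is correct and follows exactly the approach the paper intends: the paper merely states ``In view of $\mathcal{B}_{n}^{(0)}[\lambda,0]=I_{n}$ and $\mathcal{B}_{n}^{(0)}[\lambda,x]=\mathcal{P}_{n}[\lambda,x]$ we have\ldots'' and you have spelled out precisely the substitutions into Theorem~5 that this phrase abbreviates. Nothing is missing.
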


The following is a consequence of Theorem \ref{teo pg} and Corollary
\ref{teo bp}.

\begin{corollary}%
\[
\mathcal{B}_{n}\left[  \lambda,x\right]  =G_{n}\left[  \lambda,x\right]
G_{n-1}\left[  \lambda,x\right]  \cdots G_{1}\left[  \lambda,x\right]
\mathcal{B}_{n}\left[  \lambda\right]  .
\]

\end{corollary}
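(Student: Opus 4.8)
The plan is to read off the identity as a direct specialization of the product formula already established, so no new computation is really needed; the whole content is choosing the right parameters in (\ref{bp}) and then inserting the factorization of Theorem \ref{teo pg}.

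First I would specialize the first equality in (\ref{bp}) of Corollary \ref{teo bp} by taking $w=1$ and $y=0$. Using the conventions fixed just above, namely $\mathcal{B}_{n}^{(1)}\left[\lambda,x\right]=\mathcal{B}_{n}\left[\lambda,x\right]$ and $\mathcal{B}_{n}^{(1)}\left[\lambda,0\right]=\mathcal{B}_{n}\left[\lambda\right]$, and noting $x+0=x$, this collapses to
\[
\mathcal{B}_{n}\left[\lambda,x\right]=\mathcal{P}_{n}\left[\lambda,x\right]\mathcal{B}_{n}\left[\lambda\right].
\]
This isolates the Pascal matrix $\mathcal{P}_{n}\left[\lambda,x\right]$ as the only factor carrying the variable $x$, which is exactly the object that Theorem \ref{teo pg} factorizes.

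Next I would substitute the factorization $\mathcal{P}_{n}\left[\lambda,x\right]=G_{n}\left[\lambda,x\right]G_{n-1}\left[\lambda,x\right]\cdots G_{1}\left[\lambda,x\right]$ from Theorem \ref{teo pg} into the right-hand side above, obtaining the claimed expression directly. There is no genuine obstacle here: the only point requiring a moment's care is that the two cited results must be invoked in the correct order, so that the summation matrices $G_{k}\left[\lambda,x\right]$ appear to the left and the constant factor $\mathcal{B}_{n}\left[\lambda\right]$ remains on the right, and one should check that the trivial specialization $\mathcal{B}_{n}^{(1)}\left[\lambda,0\right]$ indeed matches the abbreviation $\mathcal{B}_{n}\left[\lambda\right]$ rather than, say, $\mathcal{P}_{n}\left[\lambda,0\right]=I_{n}$. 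Associativity of matrix multiplication then finishes the argument with nothing further to verify.
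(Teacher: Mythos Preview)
Your proposal is correct and matches the paper's own reasoning exactly: the paper simply states that the corollary is a consequence of Theorem \ref{teo pg} and Corollary \ref{teo bp}, which is precisely the two-step argument you spell out (specialize (\ref{bp}) with $w=1$, $y=0$, then insert the factorization of $\mathcal{P}_{n}[\lambda,x]$). There is nothing to add.
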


Consider the matrix\textbf{\ }%
\begin{align*}
\left(  \mathcal{B}_{n}\left[  \lambda,x\right]  -I_{n}\right)  ^{h}  &
=\sum\limits_{k=0}^{h}\binom{h}{k}\left(  -1\right)  ^{h-k}\left(
\mathcal{B}_{n}\left[  \lambda,x\right]  \right)  ^{k}\\
&  =\sum\limits_{k=0}^{h}\binom{h}{k}\left(  -1\right)  ^{h-k}\mathcal{B}%
_{n}^{\left(  k\right)  }\left[  \lambda,kx\right]
\end{align*}
for positive integer $h$. Since $diag\left(  \mathcal{B}_{n}\left[
\lambda,x\right]  -I_{n}\right)  =\left(  0,0,\cdots,0\right)  $ and $\left(
\mathcal{B}_{n}\left[  \lambda,x\right]  -I_{n}\right)  $ is a
lower-triangular matrix, it follows that $\left(  \mathcal{B}_{n}\left[
\lambda,x\right]  -I_{n}\right)  ^{h}=\left[  0\right]  _{n\times n}$ for
$n\leq h.$ Then%
\[
\left(  \mathcal{B}_{n}\left[  \lambda,\frac{x}{h}\right]  \right)
^{h}=\mathcal{B}_{n}^{\left(  h\right)  }\left[  \lambda,x\right]
=\sum\limits_{k=0}^{h-1}\binom{h}{k}\left(  -1\right)  ^{h-1-k}\mathcal{B}%
_{n}^{\left(  k\right)  }\left[  \lambda,\frac{k}{h}x\right]  ,\text{ for
}1\leq n\leq h.
\]
This yields%
\[
\sum\limits_{k=0}^{h-1}\binom{h}{k}\left(  -1\right)  ^{h-1-k}\beta
_{m}^{\left(  k\right)  }\left(  \lambda,\frac{k}{h}x\right)  =\beta
_{m}^{\left(  h\right)  }\left(  \lambda,x\right)  ,\text{ for }0\leq m<h.
\]
By the known identity $\beta_{m}^{\left(  h\right)  }\left(  \lambda,1\right)
=m\beta_{m-1}^{\left(  h-1\right)  }\left(  \lambda\right)  +\beta
_{m}^{\left(  h\right)  }\left(  \lambda\right)  $ for $m\geq1,$ we have
\[
\sum\limits_{k=0}^{h}\binom{h}{k}\left(  -1\right)  ^{h-k}\beta_{m}^{\left(
k\right)  }\left(  \lambda,\frac{k}{h}\right)  =-m\beta_{m-1}^{\left(
h-1\right)  }\left(  \lambda\right)  ,\text{ for }1\leq m<h.
\]
Similarly, we may get
\[
\sum\limits_{k=0}^{h-1}\binom{h}{k}\left(  -1\right)  ^{h-1-k}\left(
kx|\lambda\right)  _{m}=\left(  hx|\lambda\right)  _{m},\text{ for }0\leq
m<h,
\]
by the\emph{\ }fact that $\left(  \mathcal{P}_{n}\left[  \lambda,x\right]
\right)  ^{h}=\mathcal{P}_{n}\left[  \lambda,hx\right]  .$

\subsection{Degenerate Bernoulli polynomials of the second kind}

The higher order degenerate Bernoulli polynomials of the second kind
$\alpha_{m}^{(w)}(\lambda,x)$ are defined by \cite{1}
\begin{equation}
\left(  \frac{\lambda t}{(1+t)^{\lambda}-1}\right)  ^{w}(1+t)^{x}=\sum
_{m=0}^{\infty}\alpha_{m}^{(w)}(\lambda,x)\frac{t^{m}}{m!}. \label{1a}%
\end{equation}
For $x=0$, $\alpha_{m}^{(w)}(\lambda,0)=\alpha_{m}^{(w)}(\lambda)$ are called
the higher order degenerate Bernoulli numbers of the second kind. In the
limiting case $\lambda=0$ we have $\alpha_{m}^{(w)}\left(  0,x\right)
=m!b_{m}^{\left(  w\right)  }\left(  x\right)  ,$\ where $b_{m}^{\left(
w\right)  }\left(  x\right)  $\ are the higher order Bernoulli polynomials of
the second kind defined by
\[
\left(  \frac{t}{\log(1+t)}\right)  ^{w}(1+t)^{x}=\sum_{m=0}^{\infty}%
b_{m}^{(w)}\left(  x\right)  t^{m}.
\]
It is clear from (\ref{1}) and (\ref{1a}) that
\begin{equation}
\beta_{m}^{(w)}\left(  \frac{1}{\lambda},\frac{x}{\lambda}\right)  =\left(
\frac{1}{\lambda}\right)  ^{m}\alpha_{m}^{(w)}\left(  \lambda,x\right)  .
\label{1b2}%
\end{equation}

Let $\mathcal{L}_{n}^{\left(  w\right)  }\left[  \lambda,x\right]  $ be the
$n\times n$ matrix defined by
\[
\left(  \mathcal{L}_{n}^{\left(  w\right)  }\left[  \lambda,x\right]  \right)
_{i,j}=\left\{
\begin{array}
[c]{ll}%
\binom{i-1}{j-1}\alpha_{i-j}^{\left(  w\right)  }\left(  \lambda,x\right)  &
\text{, if }i\geq j\geq1,\\
0 & \text{, if }i<j.
\end{array}
\right.
\]
It is obvious that $\mathcal{L}_{n}^{(0)}\left[  \lambda,x\right]
=\mathcal{P}_{n}\left[  1,x\right]  .$ One can observe that $\mathcal{L}%
_{n}^{\left(  w\right)  }\left[  \lambda,x\right]  $\ satisfy properties given
for $\mathcal{B}_{n}^{\left(  w\right)  }\left[  \lambda,x\right]  ,$ however,
we prefer not to list them here.

\section{Generalized Stirling matrices}

Recall the generalized Stirling numbers of the first and of the second kinds.
For nonnegative integer $m$ and real or complex parameters $\mu,$ $\lambda$
and $x,$ with $\left(  \mu,\lambda,x\right)  \not =\left(  0,0,0\right)  ,$
the generalized Stirling numbers of the first kind $S_{1}\left(
m,k|\mu,\lambda,x\right)  $ and of the second kind $S_{2}\left(
m,k|\mu,\lambda,x\right)  $\ are defined by means of the generating functions
(cf. \cite[p. 372]{17})
\begin{align}
\left(  \frac{\left(  1+\mu t\right)  ^{\lambda/\mu}-1}{\lambda}\right)
^{k}\left(  1+\mu t\right)  ^{x/\mu}  &  =k!\sum\limits_{m=0}^{\infty}%
S_{1}\left(  m,k|\mu,\lambda,x\right)  \frac{t^{m}}{m!},\label{stx1}\\
\left(  \frac{\left(  1+\lambda t\right)  ^{\mu/\lambda}-1}{\mu}\right)
^{k}\left(  1+\lambda t\right)  ^{-x/\lambda}  &  =k!\sum\limits_{m=0}%
^{\infty}S_{2}\left(  m,k|\mu,\lambda,x\right)  \frac{t^{m}}{m!}. \label{stx}%
\end{align}
with the notations
\begin{align*}
S_{1}\left(  m,k|\mu,\lambda,x\right)   &  =S^{1}\left(  m,k\right)  =S\left(
m,k;\mu,\lambda,x\right)  ,\\
S_{2}\left(  m,k|\mu,\lambda,x\right)   &  =S^{2}\left(  m,k\right)  =S\left(
m,k;\lambda,\mu,-x\right)
\end{align*}
and the convention $S_{1}\left(  m,k|\mu,\lambda,x\right)  =S_{2}\left(
m,k|\mu,\lambda,x\right)  =0$ when $k>m.$

As Hsu and Shiue pointed out, the definitions or generating functions
generalize various Stirling-type numbers studied previously, such as;

(i) $\left\{  S_{1}\left(  m,k|1,0,0\right)  ,S_{2}\left(  m,k|1,0,0\right)
\right\}  =\left\{  s\left(  m,k\right)  ,S\left(  m,k\right)  \right\}
=\left\{  \left(  -1\right)  ^{m-k}%
\genfrac{[}{]}{0pt}{}{m}{k}%
,%
\genfrac{\{}{\}}{0pt}{}{m}{k}%
\right\}  $

$\qquad=\left\{  S_{1}\left(  m,k\right)  ,S_{2}\left(  m,k\right)  \right\}
$ are the Stirling numbers of the first kind and of the\ second kind,
respectively (\cite[Ch. 6]{Knuth}).

(ii) $\left\{  S_{1}\left(  m,k|1,\lambda,-x\right)  ,S_{2}\left(
m,k|1,\lambda,-x\right)  \right\}  =\left\{  \left(  -1\right)  ^{m-k}%
S_{1}\left(  m,k,x+\lambda|\lambda\right)  ,S_{2}\left(  m,k,x|\lambda\right)
\right\}  $

are Howard's degenerate weighted Stirling numbers of both kinds (\cite{18}).

(iii) $\left\{  S_{1}\left(  m,k|1,0,-x\right)  ,S_{2}\left(
m,k|1,0,-x\right)  \right\}  =\left\{  \left(  -1\right)  ^{m-k}R_{1}\left(
m,k,x\right)  ,R_{2}\left(  m,k,x\right)  \right\}  $ are Carlitz's weighted
Stirling numbers of both kinds (\cite{19}).

(iv) $\left\{  S_{1}\left(  m,k|-1,0,r\right)  ,S_{2}\left(
m,k|-1,0,r\right)  \right\}  =\left\{
\genfrac{[}{]}{0pt}{}{m+r}{k+r}%
_{r},\left(  -1\right)  ^{m-k}%
\genfrac{\{}{\}}{0pt}{}{m+r}{k+r}%
_{r}\right\}  $ are the $r-$Stirling numbers of both kinds (\cite{Broder}).

(v) $\left\{  S_{1}\left(  m,k|1,\lambda,0\right)  ,S_{2}\left(
m,k|1,\lambda,0\right)  \right\}  =\left\{  \left(  -1\right)  ^{m-k}%
S_{1}\left(  m,k|\lambda\right)  ,S_{2}\left(  m,k|\lambda\right)  \right\}  $
are Carlitz's degenerate Stirling numbers of both kinds (\cite{8}).

(vi) $\left\{  S_{1}\left(  m,k|-1,1,0\right)  ,S_{2}\left(
m,k|-1,1,0\right)  \right\}  =\left\{  L\left(  m,k\right)  ,\left(
-1\right)  ^{m-k}L\left(  m,k\right)  \right\}  ,$ where \linebreak$L\left(
m,k\right)  =\frac{m!}{k!}\binom{m-1}{k-1}$ are the Lah numbers.

The list above may not be complete. The combinatorial interpretations of
(i)--(iv) can be found in \cite{Broder, 19, Knuth, 18}.

From (\ref{stx1}) and (\ref{stx}), it follows that
\begin{equation}
S_{1}\left(  m,k|\mu,\lambda,x\right)  =\mu^{m-k}S_{1}\left(  m,k|1,\frac
{\lambda}{\mu},\frac{x}{\mu}\right)  \text{ and }S_{2}\left(  m,k|\mu
,\lambda,x\right)  =\mu^{m-k}S_{2}\left(  m,k|1,\frac{\lambda}{\mu},\frac
{x}{\mu}\right)  \label{8}%
\end{equation}
for $\mu\not =0.$ Letting $\lambda=0$ and $x=0$ in (\ref{8}), we have
\begin{equation}
S_{1}\left(  m,k|\mu,0,0\right)  =\mu^{m-k}S_{1}\left(  m,k\right)  \text{ and
}S_{2}\left(  m,k|\mu,0,0\right)  =\mu^{m-k}S_{2}\left(  m,k\right)  .
\label{10}%
\end{equation}

\subsection{Stirling matrices of the first type}

Let $s_{n}\left[  \mu,\lambda,x\right]  $\ and $S_{n}\left[  \mu
,\lambda,x\right]  $\ be the $n\times n$ matrices defined by%
\[
\left(  s_{n}\left[  \mu,\lambda,x\right]  \right)  _{i,j}=\left\{
\begin{array}
[c]{ll}%
S_{1}\left(  i,j|\mu,\lambda,x\right)  & \text{, if }i\geq j\geq1\\
0 & \text{, if }i<j
\end{array}
\right.
\]
and%
\[
\left(  \mathcal{S}_{n}\left[  \mu,\lambda,x\right]  \right)  _{i,j}=\left\{
\begin{array}
[c]{ll}%
S_{2}\left(  i,j|\mu,\lambda,x\right)  & \text{, if }i\geq j\geq1\\
0 & \text{, if }i<j
\end{array}
\right.
\]
which we call generalized Stirling matrices of the first type. Then the
relation (cf. \cite[Eq. (3)]{17})%
\begin{equation}
\sum\limits_{k=j}^{i}S_{1}\left(  i,k|\mu,\lambda,x\right)  S_{2}\left(
k,j|\mu,\lambda,x\right)  =\sum\limits_{k=j}^{i}S_{2}\left(  i,k|\mu
,\lambda,x\right)  S_{1}\left(  k,j|\mu,\lambda,x\right)  =\delta_{i,j}
\label{9}%
\end{equation}
yields $\mathcal{S}_{n}^{-1}\left[  \mu,\lambda,x\right]  =s_{n}\left[
\mu,\lambda,x\right]  .$ From (\ref{10}), it is seen that $s_{n}\left[
\mu,0,0\right]  $ and $\mathcal{S}_{n}\left[  \mu,0,0\right]  $ are the
Stirling\textit{\ }matrices $\mathcal{S}_{n}^{-1}\left[  \mu\right]  $ and
$\mathcal{S}_{n}\left[  \mu\right]  $ defined in \cite[p. 57]{6}.

Differentiate both sides of (\ref{stx}) with respect to $t$ (with $x=0$) to
get
\begin{align*}
&  \sum\limits_{m=0}^{\infty}S_{2}(m+1,j|\mu,\lambda,0)\frac{t^{m}}{m!}%
=\frac{1}{(j-1)!}(1+\lambda t)^{\left(  \mu-\lambda\right)  /\lambda}\left[
\frac{(1+\lambda t)^{\mu/\lambda}-1}{\mu}\right]  ^{j-1}\\
&  \quad=\sum\limits_{m=0}^{\infty}(\mu-\lambda|\lambda)_{m}\frac{t^{m}}%
{m!}\sum\limits_{m=0}^{\infty}S_{2}(m,j-1|\mu,\lambda,0)\frac{t^{m}}{m!}\\
&  \quad=\sum\limits_{m=0}^{\infty}\left[  \sum\limits_{k=j-1}^{m}\dbinom
{m}{k}(\mu-\lambda|\lambda)_{m-k}S_{2}(k,j-1|\mu,\lambda,0)\right]
\frac{t^{m}}{m!}.
\end{align*}
Thus,
\[
S_{2}(m+1,j|\mu,\lambda,0)=\sum\limits_{k=j-1}^{m}\dbinom{m}{k}(\mu
-\lambda|\lambda)_{m-k}S_{2}(k,j-1|\mu,\lambda,0).
\]
Putting $m=i-1$ gives
\begin{equation}
S_{2}\left(  i,j|\mu,\lambda,0\right)  =\sum\limits_{k=j}^{i}\dbinom{i-1}%
{k-1}(\mu-\lambda|\lambda)_{i-k}S_{2}(k-1,j-1|\mu,\lambda,0) \label{4}%
\end{equation}
which yields%
\begin{equation}
\mathcal{S}_{n}\left[  \mu,\lambda,0\right]  =\mathcal{P}_{n}\left[
\lambda,\mu-\lambda\right]  \left(  \left[  1\right]  \oplus\mathcal{S}%
_{n-1}\left[  \mu,\lambda,0\right]  \right)  . \label{19}%
\end{equation}

Note that (\ref{4}) reduces to the well known \textit{vertical} recurrence
relation%
\[
S_{2}(i,j)=\sum\limits_{k=j}^{i}\dbinom{i-1}{k-1}S_{2}(k-1,j-1)
\]
by letting $\lambda=0$ and $\mu\not =0$. The counterpart of (\ref{4}) is
\[
S_{1}\left(  i,j|\mu,\lambda,0\right)  =\sum\limits_{k=j}^{i}\dbinom{k-1}%
{j-1}S_{1}(i-1,k-1|\mu,\lambda,0)(\lambda-\mu|\lambda)_{k-j}.
\]
We also have
\begin{align*}
\dbinom{i-1}{j-1}\left(  \mu-\lambda|\lambda\right)  _{i-j}  &  =\sum
\limits_{k=j}^{i}S_{2}\left(  i,k|\mu,\lambda,0\right)  S_{1}\left(
k-1,j-1|\mu,\lambda,0\right)  ,\\
\dbinom{i-1}{j-1}\left(  \lambda-\mu|\lambda\right)  _{i-j}  &  =\sum
\limits_{k=j}^{i}S_{2}\left(  i-1,k-1|\mu,\lambda,0\right)  S_{1}\left(
k,j|\mu,\lambda,0\right)  .
\end{align*}

Furthermore, in consequence of (\ref{19}) we have the following factorization
of the matrix $\mathcal{S}_{n}\left[  \mu,\lambda,0\right]  $:%
\[
\mathcal{S}_{n}\left[  \mu,\lambda,0\right]  =Q_{n}\left[  \lambda,\mu
-\lambda\right]  Q_{n-1}\left[  \lambda,\mu-\lambda\right]  \cdots
Q_{1}\left[  \lambda,\mu-\lambda\right]  ,
\]
where $Q_{k}\left[  \lambda,x\right]  =I_{n-k}\oplus\mathcal{P}_{k}\left[
\lambda,x\right]  $, $1\leq k\leq n-1$ and $Q_{n}\left[  \lambda,x\right]
=\mathcal{P}_{n}\left[  \lambda,x\right]  .$

\subsection{Stirling matrices of the second type}

Let us define the second type generalized Stirling\ matrices $\mathcal{G}%
_{n,h}\left[  1,\lambda,x\right]  $\ and $g_{n,h}\left[  1,\lambda,x\right]
$\ of order $n$\ by%
\[
\left(  \mathcal{G}_{n,h}\left[  1,\lambda,x\right]  \right)  _{i,j}=\left\{
\begin{array}
[c]{ll}%
\binom{i-1}{j-1}\binom{i-h}{j-h}^{-1}S_{2}\left(  i-h,j-h|1,\lambda,x\right)
& \text{, if }i>j\geq1\text{ and }j\geq h\\
1 & \text{, if }i=j\\
0 & \text{, otherwise}%
\end{array}
\right.
\]
and
\[
\left(  g_{n,h}\left[  1,\lambda,x\right]  \right)  _{i,j}=\left\{
\begin{array}
[c]{ll}%
\binom{i-1}{j-1}\binom{i-h}{j-h}^{-1}S_{1}\left(  i-h,j-h|1,\lambda,x\right)
& \text{, if }i>j\geq1\text{ and }j\geq h\\
1 & \text{, if }i=j\\
0 & \text{, otherwise}.
\end{array}
\right.
\]
It is obvious from (\ref{9}) that $g_{n,h}\left[  1,\lambda,x\right]  =\left(
\mathcal{G}_{n,h}\left[  1,\lambda,x\right]  \right)  ^{-1}$. We have

\begin{theorem}
\label{th bs}%
\begin{align*}
\left(  \mathcal{G}_{n,h}\left[  1,\lambda,-x\right]  \right)  _{i,j}  &
=\left(  \mathcal{B}_{n}^{\left(  h\right)  }\left[  \lambda,x-y\right]
\mathcal{G}_{n,0}\left[  1,\lambda,-y\right]  \right)  _{i,j}=\left(
\mathcal{G}_{n,0}\left[  1,\lambda,-y\right]  \mathcal{L}_{n}^{\left(
-h\right)  }\left[  \lambda,x-y\right]  \right)  _{i,j},\\
\left(  g_{n,h}\left[  1,\lambda,-x\right]  \right)  _{i,j}  &  =\left(
g_{n,0}\left[  1,\lambda,-y\right]  \mathcal{B}_{n}^{\left(  -h\right)
}\left[  \lambda,y-x\right]  \right)  _{i,j}=\left(  \mathcal{L}_{n}^{\left(
h\right)  }\left[  \lambda,y-x\right]  g_{n,0}\left[  1,\lambda,-y\right]
\right)  _{i,j}.
\end{align*}
for $j\geq h.$ In particular,%
\begin{align*}
\mathcal{P}_{n}\left[  \lambda,x-y\right]   &  =\mathcal{G}_{n,0}\left[
1,\lambda,-x\right]  g_{n,0}\left[  1,\lambda,-y\right]  ,\\
\mathcal{P}_{n}\left[  1,y-x\right]   &  =g_{n,0}\left[  1,\lambda,-x\right]
\mathcal{G}_{n,0}\left[  1,\lambda,-y\right]  .
\end{align*}

\end{theorem}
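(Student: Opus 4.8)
The plan is to reduce every asserted matrix identity to an identity between individual $(i,j)$-entries and then verify each entry identity by comparing column exponential generating functions. Since all matrices involved are lower triangular with $1$'s on the diagonal, and since $\mathcal{G}_{n,h}$ and $g_{n,h}$ reduce to the identity matrix in their first $h-1$ columns, only the entries with $i>j\geq h$ carry content; the cases $i=j$ and $i<j$ are immediate. Two small bookkeeping identities will be used repeatedly to absorb the off-diagonal scalings coming from the definitions of $\mathcal{G}_{n,0}$ and $g_{n,0}$: namely $\tfrac{1}{k}\binom{i-1}{k-1}=\tfrac{1}{i}\binom{i}{k}$ when the Stirling factor sits at the summation (inner) index, and $k\binom{k-1}{j-1}=j\binom{k}{j}$ when it sits at the outer index. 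After these reductions I expect all four entry identities in the first two displays to collapse to matching one of the two column generating functions
\[
\tfrac{1}{j!}\,t^{h}f(t)^{\,j-h}(1+\lambda t)^{x/\lambda}\qquad\text{and}\qquad \tfrac{1}{j!}\,t^{h}g(t)^{\,j-h}(1+t)^{-x},
\]
where $f(t)=(1+\lambda t)^{1/\lambda}-1$ and $g(t)=\bigl((1+t)^{\lambda}-1\bigr)/\lambda$ are compositional inverses of one another.

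For the two identities in which a binomial-type matrix ($\mathcal{B}_n^{(h)}$ or $\mathcal{L}_n^{(h)}$) stands on the left and a Stirling matrix on the right, I would expand the product entry, apply $\tfrac1k\binom{i-1}{k-1}=\tfrac1i\binom{i}{k}$ to pull out the scalar $\tfrac{j}{i}$, and recognise the remaining sum as a binomial convolution. Its column EGF then factors as the product of the Bernoulli EGF and the column EGF of the Stirling matrix; using the defining generating functions \eqref{1}, \eqref{1a}, \eqref{stx1}, \eqref{stx}, the Bernoulli factor $\bigl(t/f(t)\bigr)^{h}$ multiplies the Stirling factor $f(t)^{j}/j!$ to give $t^{h}f(t)^{\,j-h}/j!$, while the exponential prefactors $(1+\lambda t)^{(x-y)/\lambda}$ and $(1+\lambda t)^{y/\lambda}$ add in the exponent and collapse to $(1+\lambda t)^{x/\lambda}$ (and analogously with $g$ and $(1+t)$ for the $g_{n,h}$ case). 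Extracting the coefficient of $t^{i}/i!$ and restoring $\tfrac{j}{i}$ reproduces $\binom{i-1}{j-1}\binom{i-h}{j-h}^{-1}S_2(i-h,j-h|1,\lambda,-x)$, that is, the required entry of $\mathcal{G}_{n,h}$ (resp.\ $g_{n,h}$).

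The two remaining identities, in which the Stirling matrix sits on the left, are where I expect the real work to be, since the column EGF no longer factors as a simple product. Here I would instead use the horizontal generating function $\sum_i S_2(i,k|1,\lambda,-y)\,t^i/i! = f(t)^k(1+\lambda t)^{y/\lambda}/k!$ (and its $S_1$-analogue with $g$), together with $k\binom{k-1}{j-1}=j\binom{k}{j}$, to rewrite the product's column EGF (after extracting $\tfrac{j}{i}$) as $(1+\lambda t)^{y/\lambda}$ times $\sum_k\binom{k}{j}\alpha_{k-j}^{(-h)}(\lambda,x-y)\,f(t)^k/k!$. The inner sum equals $\tfrac{1}{j!}f(t)^{j}$ times the second-kind Bernoulli EGF evaluated at $u=f(t)$; the crucial computation is that $\bigl((1+u)^{\lambda}-1\bigr)/(\lambda u)$ at $u=f(t)$ equals $t/f(t)$, which is exactly the compositional-inverse relationship between $f$ and $g$. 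This turns the second-kind Bernoulli factor into $\bigl(t/f(t)\bigr)^{h}(1+\lambda t)^{(x-y)/\lambda}$, so the product again collapses to the target $\tfrac1{j!}t^{h}f(t)^{\,j-h}(1+\lambda t)^{x/\lambda}$. I anticipate that keeping the signs $w=\pm h$ and the arguments $x-y$, $y-x$, $-x$, $-y$ consistent through this composition step will be the main source of difficulty.

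Finally, the two ``In particular'' identities come out as the specialisations $h=0$, using $\mathcal{B}_n^{(0)}[\lambda,\cdot]=\mathcal{P}_n[\lambda,\cdot]$, $\mathcal{L}_n^{(0)}[\lambda,\cdot]=\mathcal{P}_n[1,\cdot]$ and $g_{n,0}=\mathcal{G}_{n,0}^{-1}$. At $h=0$ the restriction $j\geq h$ is vacuous, so the first entry identity becomes the genuine matrix equation $\mathcal{G}_{n,0}[1,\lambda,-x]=\mathcal{P}_n[\lambda,x-y]\,\mathcal{G}_{n,0}[1,\lambda,-y]$; right-multiplying by $g_{n,0}[1,\lambda,-y]$ gives $\mathcal{P}_n[\lambda,x-y]=\mathcal{G}_{n,0}[1,\lambda,-x]\,g_{n,0}[1,\lambda,-y]$. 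Likewise the fourth identity at $h=0$ reads $g_{n,0}[1,\lambda,-x]=\mathcal{P}_n[1,y-x]\,g_{n,0}[1,\lambda,-y]$, and right-multiplying by $\mathcal{G}_{n,0}[1,\lambda,-y]$ yields $\mathcal{P}_n[1,y-x]=g_{n,0}[1,\lambda,-x]\,\mathcal{G}_{n,0}[1,\lambda,-y]$, completing the proof.
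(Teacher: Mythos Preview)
Your plan is correct and, for the identity $(\mathcal{G}_{n,h})_{i,j}=(\mathcal{B}_n^{(h)}\mathcal{G}_{n,0})_{i,j}$, it is exactly what the paper does: compare column exponential generating functions, notice that the Bernoulli factor $(t/f(t))^{h}(1+\lambda t)^{(x-y)/\lambda}$ multiplied by $f(t)^{j}(1+\lambda t)^{y/\lambda}/j!$ collapses to $t^{h}f(t)^{\,j-h}(1+\lambda t)^{x/\lambda}/j!$, and then read off the coefficient of $t^{i}/i!$.

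Where you diverge is in the remaining three identities. You propose to redo the EGF computation for the $\mathcal{L}_n^{(h)}g_{n,0}$ case and then to handle the two ``Stirling on the left'' cases by substituting $u=f(t)$ into the second-kind Bernoulli generating function and using that $f$ and $g$ are compositional inverses. This is perfectly valid and your computation $(1+u)^{\lambda}-1=\lambda t$ at $u=f(t)$ is exactly the right observation. The paper, however, takes a shorter route for the $g_{n,h}=\mathcal{L}_n^{(h)}g_{n,0}$ identity: having established \eqref{5}, it simply replaces $(\lambda,x,y)$ by $(1/\lambda,-x/\lambda,-y/\lambda)$ and invokes the scaling relations \eqref{1b2} and \eqref{50}, namely $\beta_m^{(w)}(1/\lambda,x/\lambda)=\lambda^{-m}\alpha_m^{(w)}(\lambda,x)$ and $S_2(i,j|1,1/\lambda,x/\lambda)=\lambda^{-(i-j)}S_1(i,j|1,\lambda,-x)$, to turn \eqref{5} directly into \eqref{5a}. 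The paper does not write out separate arguments for the two ``Stirling on the left'' identities at all; your composition argument supplies what the paper leaves implicit, while the paper's substitution trick would also derive each of these from its partner with no further generating-function work. Your derivation of the ``In particular'' statements from the $h=0$ case is the intended one.
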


\begin{proof}
By (\ref{1}) and (\ref{stx}), we have%
\begin{align*}
&  \left(  \frac{t}{(1+\lambda t)^{1/\lambda}-1}\right)  ^{h}\frac{(1+\lambda
t)^{x/\lambda}}{j!}\left[  (1+\lambda t)^{1/\lambda}-1\right]  ^{j}\\
&  \quad=\sum_{m=j}^{\infty}\left(  \sum_{k=j}^{m}\binom{m}{k}\frac{1}%
{m!}\beta_{m-k}^{\left(  h\right)  }(\lambda,x-y)S_{2}\left(  k,j|1,\lambda
,-y\right)  \right)  t^{m}\\
&  \quad=\sum_{m=j}^{\infty}\left(  \frac{\left(  j-h\right)  !}{j!\left(
m-h\right)  !}S_{2}\left(  m-h,j-h|1,\lambda,-x\right)  \right)  t^{m}\\
&  \quad=\frac{t^{h}}{j!}(1+\lambda t)^{x/\lambda}\left[  (1+\lambda
t)^{1/\lambda}-1\right]  ^{j-h}%
\end{align*}
for an integer $h$ and $j\geq h.$ Then
\begin{equation}
\frac{\binom{i}{j}}{\binom{i-h}{j-h}}S_{2}\left(  i-h,j-h|1,\lambda,-x\right)
=\sum_{k=j}^{i}\binom{i}{k}\beta_{i-k}^{\left(  h\right)  }(\lambda
,x-y)S_{2}\left(  k,j|1,\lambda,-y\right)  \label{5}%
\end{equation}
or%
\[
\frac{\binom{i-1}{j-1}}{\binom{i-h}{j-h}}S_{2}\left(  i-h,j-h|1,\lambda
,-x\right)  =\sum_{k=j}^{i}\binom{i-1}{k-1}\beta_{i-k}^{\left(  h\right)
}(\lambda,x-y)\frac{j}{k}S_{2}\left(  k,j|1,\lambda,-y\right)  ,
\]
which gives $\left(  \mathcal{G}_{n,h}\left[  1,\lambda,-x\right]  \right)
_{i,j}=\left(  \mathcal{B}_{n}^{\left(  h\right)  }\left[  \lambda,x-y\right]
\mathcal{G}_{n,0}\left[  1,\lambda,-y\right]  \right)  _{i,j}.$

It can be seen from (\ref{8}) and generating functions that{\Large \ }%
\begin{equation}
S_{2}\left(  i,j|1,\frac{1}{\lambda},\frac{x}{\lambda}\right)  =\left(
\frac{1}{\lambda}\right)  ^{i-j}S_{1}\left(  i,j|1,\lambda,-x\right)  .
\label{50}%
\end{equation}
Thus, taking into account (\ref{1b2}) and (\ref{50}), replace $\left(
\lambda,x,y\right)  $\ by $\left(  \dfrac{1}{\lambda},-\dfrac{x}{\lambda
},-\dfrac{y}{\lambda}\right)  $\ in (\ref{5}) to get%
\begin{equation}
\frac{\binom{i}{j}}{\binom{i-h}{j-h}}S_{1}\left(  i-h,j-h|1,\lambda,-x\right)
=\sum_{k=j}^{i}\binom{i}{k}\alpha_{i-k}^{\left(  h\right)  }(\lambda
,y-x)S_{1}\left(  k,j|1,\lambda,-y\right)  \label{5a}%
\end{equation}
or
\[
\frac{\binom{i-1}{j-1}}{\binom{i-h}{j-h}}S_{1}\left(  i-h,j-h|1,\lambda
,-x\right)  =\sum_{k=j}^{i}\binom{i-1}{k-1}\alpha_{i-k}^{\left(  h\right)
}(\lambda,y-x)\frac{j}{k}S_{1}\left(  k,j|1,\lambda,-y\right)
\]
which gives $\left(  g_{n,h}\left[  1,\lambda,-x\right]  \right)
_{i,j}=\left(  \mathcal{L}_{n}^{\left(  h\right)  }\left[  \lambda,y-x\right]
g_{n,0}\left[  1,\lambda,-y\right]  \right)  _{i,j}.$
\end{proof}

The theorem above shows that the Bernoulli polynomials can be expressed in
terms of the Stirling numbers.

\begin{corollary}
\label{cor bst1}For $i\geq j\geq0$ and $j\geq h,$ we have
\begin{align}
\dbinom{i}{j}\beta_{i-j}^{\left(  h\right)  }\left(  \lambda,x-y\right)   &
=\sum\limits_{k=j}^{i}\frac{\binom{i}{k}}{\binom{i-h}{k-h}}S_{2}\left(
i-h,k-h|1,\lambda,-x\right)  S_{1}\left(  k,j|1,\lambda,-y\right)
,\label{6}\\
\dbinom{i}{j}\alpha_{i-j}^{\left(  h\right)  }\left(  \lambda,y-x\right)   &
=\sum\limits_{k=j}^{i}\frac{\binom{i}{k}}{\binom{i-h}{k-h}}S_{1}\left(
i-h,k-h|1,\lambda,-x\right)  S_{2}\left(  k,j|1,\lambda,-y\right)  .
\label{6a}%
\end{align}
In particular,
\begin{align*}
\beta_{i}\left(  \lambda,x\right)   &  =\sum\limits_{k=0}^{i}\frac{1}%
{k+1}S_{2}\left(  i,k|1,\lambda,-x\right)  \left(  \lambda-1\right)  _{k},\\
\alpha_{i}\left(  \lambda,x\right)   &  =\sum\limits_{k=0}^{i}\frac{1}%
{k+1}S_{1}\left(  i,k|1,\lambda,x\right)  \left(  1|\lambda\right)  _{k+1}.
\end{align*}

\end{corollary}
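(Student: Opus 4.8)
The plan is to obtain both (\ref{6}) and (\ref{6a}) from Theorem \ref{th bs} by ``inverting'' the base Stirling matrix, which at the level of entries is exactly the orthogonality (\ref{9}). Equation (\ref{5}) already writes the normalized entry $\frac{\binom{i}{m}}{\binom{i-h}{m-h}}S_{2}(i-h,m-h|1,\lambda,-x)$ of $\mathcal{G}_{n,h}[1,\lambda,-x]$ as a $\beta^{(h)}$-weighted superposition of entries $S_{2}(k,m|1,\lambda,-y)$ of $\mathcal{G}_{n,0}[1,\lambda,-y]$. To isolate the single coefficient $\beta_{i-j}^{(h)}(\lambda,x-y)$, I would take (\ref{5}) with its free lower index renamed $m$, multiply through by $S_{1}(m,j|1,\lambda,-y)$, and sum over $m$ from $j$ to $i$.

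Carrying out this summation, the left-hand side becomes precisely the right-hand side of (\ref{6}) (with $k$ renamed $m$), while on the right-hand side I would interchange the order of the double sum so that the inner sum
\[
\sum_{m=j}^{k}S_{2}(k,m|1,\lambda,-y)\,S_{1}(m,j|1,\lambda,-y)=\delta_{k,j}
\]
collapses by (\ref{9}), leaving only $\binom{i}{j}\beta_{i-j}^{(h)}(\lambda,x-y)$. This establishes (\ref{6}); in matrix terms it is nothing but $\mathcal{B}_{n}^{(h)}[\lambda,x-y]=\mathcal{G}_{n,h}[1,\lambda,-x]\,g_{n,0}[1,\lambda,-y]$, obtained by right-multiplying the first line of Theorem \ref{th bs} by $g_{n,0}[1,\lambda,-y]=\mathcal{G}_{n,0}^{-1}[1,\lambda,-y]$. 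Identity (\ref{6a}) follows by the same device applied to (\ref{5a}): multiply by $S_{2}(m,j|1,\lambda,-y)$, sum over $m$, and use the companion form $\sum_{m=j}^{k}S_{1}(k,m|1,\lambda,-y)S_{2}(m,j|1,\lambda,-y)=\delta_{k,j}$ of (\ref{9}).

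For the two closed formulas I would specialize to $h=1$, $j=1$, $y=0$. The binomial ratio then simplifies to $\binom{i}{k}/\binom{i-1}{k-1}=i/k$, and dividing by $\binom{i}{1}=i$ converts the weight into $1/k$. It remains to evaluate the rank-one Stirling numbers from their generating functions: setting $\mu=1$, $x=0$ in (\ref{stx1}) gives $((1+t)^{\lambda}-1)/\lambda=\sum_{m}S_{1}(m,1|1,\lambda,0)t^{m}/m!$, whence $S_{1}(m,1|1,\lambda,0)=(\lambda-1)(\lambda-2)\cdots(\lambda-m+1)=(\lambda-1|1)_{m-1}$, and likewise (\ref{stx}) yields $S_{2}(m,1|1,\lambda,0)=(1|\lambda)_{m}$. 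Substituting these and shifting the summation index by one then produces the stated expression for $\beta_{i}(\lambda,x)$ and, after also replacing $x$ by $-x$ to absorb the sign in the argument $y-x$ of $\alpha$, the expression for $\alpha_{i}(\lambda,x)$.

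The conceptual content is light; the main obstacle I anticipate is bookkeeping. One must keep the constraint $j\ge h$ in mind, which forces the choice $j=h=1$ in the special cases and hence an index shift that has to be tracked so that the surviving $k=0$ term, carrying $S_{2}(i,0|1,\lambda,-x)$ respectively $S_{1}(i,0|1,\lambda,-x)$, lands correctly. A secondary check is the consistency of the two binomial normalizations $\binom{i}{k}$ and $\binom{i-1}{k-1}$ when one prefers to read (\ref{6}) off the matrix product $\mathcal{G}_{n,h}\,g_{n,0}$ directly, which requires the subset-of-a-subset identity $\binom{i-1}{k-1}\binom{k-1}{j-1}=\binom{i-1}{j-1}\binom{i-j}{k-j}$; the summation route above sidesteps this entirely. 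Finally, the two generating-function evaluations of the rank-one Stirling numbers should be confirmed, since they carry the whole arithmetic of the special formulas.
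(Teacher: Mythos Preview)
Your approach is correct and is essentially the paper's own: the corollary is an immediate consequence of Theorem \ref{th bs}, obtained by right-multiplying the identity $\mathcal{G}_{n,h}[1,\lambda,-x]=\mathcal{B}_{n}^{(h)}[\lambda,x-y]\,\mathcal{G}_{n,0}[1,\lambda,-y]$ by $g_{n,0}[1,\lambda,-y]$ (and similarly for (\ref{6a})), which at the entry level is exactly your use of the orthogonality (\ref{9}) on (\ref{5}) and (\ref{5a}). Your derivation of the two special formulas via $h=j=1$, $y=0$, the evaluation $S_{1}(m,1|1,\lambda,0)=(\lambda-1|1)_{m-1}$, $S_{2}(m,1|1,\lambda,0)=(1|\lambda)_{m}$, and an index shift (together with a relabeling $i-1\mapsto i$, which you should make explicit) is also the intended one.
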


Note that we can equally well write the result (\ref{6}) in the form
\begin{align}
&  \binom{i-m}{j-m}\beta_{i-j}^{\left(  h-m\right)  }\left(  \lambda
,x-y\right) \nonumber\\
&  \qquad=\sum\limits_{k=j}^{i}\frac{\binom{i-m}{k-m}}{\binom{i-h}{k-h}}%
S_{2}\left(  i-h,k-h|1,\lambda,-x\right)  S_{1}\left(  k-m,j-m|1,\lambda
,-y\right)  \label{2}%
\end{align}
for $i\geq j\geq0$ and $j\geq\max\left\{  h,m\right\}  $ because of\textbf{\ }%
\begin{align*}
\mathcal{G}_{n,h}\left[  1,\lambda,-x\right]  g_{n,m}\left[  1,\lambda
,-y\right]   &  =\mathcal{G}_{n,h}\left[  1,\lambda,-x\right]  g_{n,0}\left[
1,\lambda,-z\right]  \mathcal{G}_{n,0}\left[  1,\lambda,-z\right]
g_{n,m}\left[  1,\lambda,-y\right] \\
&  =\mathcal{B}_{n}^{\left(  h\right)  }\left[  \lambda,x-z\right]
\mathcal{B}_{n}^{\left(  -m\right)  }\left[  \lambda,z-y\right]
=\mathcal{B}_{n}^{\left(  h-m\right)  }\left[  \lambda,x-y\right]  .
\end{align*}
Set $h=y=\lambda=0$ and $-m=l\geq0$ to compute any positive (integer) order of
Bernoulli polynomials
\[
\binom{i}{j}\binom{j+l}{j}^{-1}B_{i-j}^{\left(  l\right)  }\left(  x\right)
=\sum\limits_{k=j}^{i}\binom{l+k}{l}^{-1}R_{2}\left(  i,k,x\right)
S_{1}\left(  l+k,l+j\right)
\]
and in particular
\[
B_{i}^{\left(  i\right)  }=\sum\limits_{k=0}^{i}\binom{i+k}{i}^{-1}%
S_{2}\left(  i,k\right)  S_{1}\left(  i+k,i\right)  .
\]

By (\ref{8}) and Corollary \ref{cor bst1}, we have the generalized
orthogonality relations
\begin{align*}
\dbinom{i}{j}\left(  x-y|\lambda\right)  _{i-j}  &  =\sum\limits_{k=j}%
^{i}S_{2}\left(  i,k|\mu,\lambda,-x\right)  S_{1}\left(  k,j|\mu
,\lambda,-y\right)  ,\\
\dbinom{i}{j}\left(  y-x|\mu\right)  _{i-j}  &  =\sum\limits_{k=j}^{i}%
S_{1}\left(  i,k|\mu,\lambda,-x\right)  S_{2}\left(  k,j|\mu,\lambda
,-y\right)  .
\end{align*}

Theorem \ref{th bs} also entails following.

\begin{corollary}
\label{cor bst2}For $i\geq j\geq0$ and $j\geq h,$ we have%
\begin{align}
\frac{\binom{i}{j}}{\binom{i-h}{j-h}}S_{2}\left(  i-h,j-h|1,\lambda,-x\right)
&  =\sum_{k=j}^{i}\binom{k}{j}S_{2}\left(  i,k|1,\lambda,-y\right)
\alpha_{k-j}^{\left(  -h\right)  }(\lambda,x-y),\label{7a}\\
\frac{\binom{i}{j}}{\binom{i-h}{j-h}}S_{1}\left(  i-h,j-h|1,\lambda,-x\right)
&  =\sum_{k=j}^{i}\binom{k}{j}S_{1}\left(  i,k|1,\lambda,-y\right)
\beta_{k-j}^{\left(  -h\right)  }(\lambda,y-x). \label{7}%
\end{align}
In particular,%
\begin{align*}
\frac{j+1}{i+1}S_{2}\left(  i+1,j+1|1,\lambda,-x\right)   &  =\sum_{k=j}%
^{i}\binom{k}{j}S_{2}\left(  i,k|\lambda\right)  \alpha_{k-j}(\lambda,x),\\
\frac{j+1}{i+1}S_{1}\left(  i+1,j+1|1,\lambda,x\right)   &  =\sum_{k=j}%
^{i}\binom{k}{j}\left(  -1\right)  ^{i-k}S_{1}\left(  i,k|\lambda\right)
\beta_{k-j}(\lambda,x).
\end{align*}

\end{corollary}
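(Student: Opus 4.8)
The two displayed identities are simply the $(i,j)$-entries of the two matrix equalities in Theorem \ref{th bs} that were \emph{not} consumed in proving Corollary \ref{cor bst1}. Where Corollary \ref{cor bst1} isolated a degenerate Bernoulli matrix on one side, here I would instead leave the second-type Stirling matrices in place and read off entries directly. Concretely, (\ref{7a}) should come from
\[
\mathcal{G}_{n,h}\left[  1,\lambda,-x\right]  =\mathcal{G}_{n,0}\left[
1,\lambda,-y\right]  \mathcal{L}_{n}^{\left(  -h\right)  }\left[
\lambda,x-y\right]
\]
and (\ref{7}) from
\[
g_{n,h}\left[  1,\lambda,-x\right]  =g_{n,0}\left[  1,\lambda,-y\right]
\mathcal{B}_{n}^{\left(  -h\right)  }\left[  \lambda,y-x\right]  .
\]

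For (\ref{7a}) I would compare entries on both sides. By definition the $(i,j)$-entry of the left member is $\binom{i-1}{j-1}\binom{i-h}{j-h}^{-1}S_{2}\left(  i-h,j-h|1,\lambda,-x\right)$, while that of the right member is $\sum_{k=j}^{i}\left(  \mathcal{G}_{n,0}\left[  1,\lambda,-y\right]  \right)  _{i,k}\binom{k-1}{j-1}\alpha_{k-j}^{\left(  -h\right)  }(\lambda,x-y)$. The decisive simplification is that the $h=0$ Stirling factor collapses through $\binom{i-1}{k-1}\binom{i}{k}^{-1}=k/i$, so that $\left(  \mathcal{G}_{n,0}\left[  1,\lambda,-y\right]  \right)  _{i,k}=\frac{k}{i}S_{2}\left(  i,k|1,\lambda,-y\right)$. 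Multiplying the resulting equality through by $i/j$ and applying the two elementary relations $\frac{i}{j}\binom{i-1}{j-1}=\binom{i}{j}$ and $\frac{k}{j}\binom{k-1}{j-1}=\binom{k}{j}$ turns both members into exactly the shape of (\ref{7a}). The derivation of (\ref{7}) is verbatim the same after replacing $S_{2}$ by $S_{1}$, $\mathcal{G}_{n,0}$ by $g_{n,0}$, $\mathcal{L}_{n}^{(-h)}$ by $\mathcal{B}_{n}^{(-h)}$, and $\alpha$ by $\beta$.

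The particular cases follow by specializing the free parameters. Setting $h=-1$ and $y=0$ in (\ref{7a}) gives $\binom{i}{j}/\binom{i+1}{j+1}=(j+1)/(i+1)$ on the left and $\alpha_{k-j}^{(1)}(\lambda,x)=\alpha_{k-j}(\lambda,x)$ on the right, while the identification $S_{2}\left(  i,k|1,\lambda,0\right)  =S_{2}\left(  i,k|\lambda\right)$ from item (v) of the introductory list produces the stated first form. For the second particular case I would set $h=-1$, $y=0$ in (\ref{7}) and additionally replace $x$ by $-x$, so that the left side becomes $\frac{j+1}{i+1}S_{1}\left(  i+1,j+1|1,\lambda,x\right)$ and the argument of $\beta$ collapses to $\left(  \lambda,x\right)$; the companion identification $S_{1}\left(  i,k|1,\lambda,0\right)  =(-1)^{i-k}S_{1}\left(  i,k|\lambda\right)$ then supplies the factor $(-1)^{i-k}$.

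I do not expect a genuine obstacle, since Theorem \ref{th bs} already carries all the analytic content and what remains is bookkeeping. The only place demanding care is the normalization by $i/j$: the factor $\binom{i-1}{k-1}\binom{i}{k}^{-1}=k/i$ emerging from the $h=0$ matrix must cancel precisely against this normalization so that each summand acquires the clean weight $\binom{k}{j}$ rather than $\frac{k}{i}\binom{k-1}{j-1}$. I would also verify that the index restriction $j\geq h$ inherited from Theorem \ref{th bs} survives the specialization $h=-1$, which it does since $j\geq0\geq-1$.
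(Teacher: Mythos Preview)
Your proposal is correct and follows exactly the route the paper intends: the corollary is stated immediately after Theorem \ref{th bs} with only the remark that it ``also entails'' these identities, and you have supplied precisely the bookkeeping that converts the second matrix equality in each line of Theorem \ref{th bs} into (\ref{7a}) and (\ref{7}), including the $i/j$ renormalization and the specializations $h=-1$, $y=0$ (with $x\mapsto -x$ for the $S_{1}$ case) for the particular formulas.
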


In consequence of Theorem \ref{teo pg} and Theorem \ref{th bs} we have

\begin{corollary}%
\begin{align*}
\mathcal{G}_{n,0}\left[  1,\lambda,-x\right]   &  =G_{n}\left[  \lambda
,x\right]  G_{n-1}\left[  \lambda,x\right]  \cdots G_{1}\left[  \lambda
,x\right]  \mathcal{G}_{n,0}\left[  1,\lambda,0\right] \\
&  =\mathcal{G}_{n,0}\left[  1,\lambda,0\right]  G_{n}\left[  1,x\right]
G_{n-1}\left[  1,x\right]  \cdots G_{1}\left[  1,x\right]  .
\end{align*}
\bigskip
\end{corollary}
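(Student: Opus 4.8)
The plan is to reduce everything to the two ``in particular'' identities of Theorem \ref{th bs} together with the Pascal factorization of Theorem \ref{teo pg}, so that no fresh computation is needed. The two identities I will start from are $\mathcal{P}_{n}\left[\lambda,x-y\right]=\mathcal{G}_{n,0}\left[1,\lambda,-x\right]g_{n,0}\left[1,\lambda,-y\right]$ and $\mathcal{P}_{n}\left[1,y-x\right]=g_{n,0}\left[1,\lambda,-x\right]\mathcal{G}_{n,0}\left[1,\lambda,-y\right]$, and the single algebraic fact I will use repeatedly is $g_{n,0}\left[1,\lambda,x\right]=\left(\mathcal{G}_{n,0}\left[1,\lambda,x\right]\right)^{-1}$, recorded just before Theorem \ref{th bs}.

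For the first line, I would specialize the first identity at $y=0$, which gives $\mathcal{P}_{n}\left[\lambda,x\right]=\mathcal{G}_{n,0}\left[1,\lambda,-x\right]g_{n,0}\left[1,\lambda,0\right]$. Right-multiplying by $\left(g_{n,0}\left[1,\lambda,0\right]\right)^{-1}=\mathcal{G}_{n,0}\left[1,\lambda,0\right]$ isolates $\mathcal{G}_{n,0}\left[1,\lambda,-x\right]=\mathcal{P}_{n}\left[\lambda,x\right]\mathcal{G}_{n,0}\left[1,\lambda,0\right]$. Substituting the factorization $\mathcal{P}_{n}\left[\lambda,x\right]=G_{n}\left[\lambda,x\right]\cdots G_{1}\left[\lambda,x\right]$ from Theorem \ref{teo pg} then produces the first asserted equality verbatim.

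For the second line, I would instead specialize the second identity at $x=0$, obtaining $\mathcal{P}_{n}\left[1,y\right]=g_{n,0}\left[1,\lambda,0\right]\mathcal{G}_{n,0}\left[1,\lambda,-y\right]$. Left-multiplying by $\mathcal{G}_{n,0}\left[1,\lambda,0\right]$ and renaming $y$ as $x$ yields $\mathcal{G}_{n,0}\left[1,\lambda,-x\right]=\mathcal{G}_{n,0}\left[1,\lambda,0\right]\mathcal{P}_{n}\left[1,x\right]$; here the Pascal parameter is $1$ rather than $\lambda$, which is exactly why the product on the right of the second line runs over $G_{k}\left[1,x\right]$. Applying Theorem \ref{teo pg} with parameter $1$ to expand $\mathcal{P}_{n}\left[1,x\right]$ completes this half.

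There is essentially no hard step: the whole argument is a matching of specializations ($y=0$ in the first factorization, $x=0$ in the second) followed by cancelling a factor via the inverse relation. The only point that demands attention — and the one I would verify first — is bookkeeping the parameter inside the Pascal matrix, since the two ``in particular'' identities of Theorem \ref{th bs} carry $\lambda$ in one and $1$ in the other; getting this right is precisely what distinguishes the $G_{k}\left[\lambda,x\right]$ product of the first line from the $G_{k}\left[1,x\right]$ product of the second.
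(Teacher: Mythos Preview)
Your proposal is correct and matches the paper's intended argument exactly: the paper simply states the corollary as a consequence of Theorem~\ref{teo pg} and Theorem~\ref{th bs}, and what you have written is precisely the unpacking of that remark via the two ``in particular'' identities specialized at $y=0$ and $x=0$, followed by the Pascal factorization. Your care in tracking the parameter ($\lambda$ versus $1$) inside the Pascal matrix is the one nontrivial bookkeeping point, and you handle it correctly.
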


It is evident from definitions that
\begin{align}
\beta_{m}^{\left(  -h\right)  }(\lambda,x)  &  =\dbinom{m+h}{h}^{-1}%
S_{2}\left(  m+h,h|1,\lambda,-x\right)  ,\label{14}\\
\alpha_{m}^{\left(  -h\right)  }(\lambda,x)  &  =\dbinom{m+h}{h}^{-1}%
S_{1}\left(  m+h,h|1,\lambda,x\right)  \label{14a}%
\end{align}
for an integer $h\geq0$. Therefore, (\ref{5}), (\ref{5a}), (\ref{7a}) and
(\ref{7}) may be specified according as $h$ is negative or positive :
\begin{align}
&  \binom{h+j}{j}S_{2}\left(  m,j+h|1,\lambda,-\left(  x+y\right)  \right)
\nonumber\\
&  \qquad\qquad=\sum_{k=j}^{m-h}\binom{m}{k}S_{2}\left(  m-k,h|1,\lambda
,-x\right)  S_{2}\left(  k,j|1,\lambda,-y\right)  ,\text{ }m=i+h,\label{s2s}\\
&  \binom{h+j}{j}S_{1}\left(  m,j+h|1,\lambda,-\left(  x+y\right)  \right)
\nonumber\\
&  \qquad\qquad=\sum_{k=j}^{m-h}\binom{m}{k}S_{1}\left(  m-k,h|1,\lambda
,-x\right)  S_{1}\left(  k,j|1,\lambda,-y\right)  ,\text{ }m=i+h, \label{s1s}%
\end{align}%
\begin{align}
&  \frac{\binom{i}{j}}{\binom{i+h}{j+h}}S_{2}\left(  i+h,j+h|1,\lambda
,-\left(  x+y\right)  \right)  =\sum_{k=j}^{i}\binom{k}{j}\alpha
_{k-j}^{\left(  h\right)  }(\lambda,x)S_{2}\left(  i,k|1,\lambda,-y\right)
,\label{7a1}\\
&  \frac{\binom{i}{j}}{\binom{i+h}{j+h}}S_{1}\left(  i+h,j+h|1,\lambda
,-\left(  y-x\right)  \right)  =\sum_{k=j}^{i}\binom{k}{j}\beta_{k-j}^{\left(
h\right)  }(\lambda,x)S_{1}\left(  i,k|1,\lambda,-y\right)  \label{71}%
\end{align}
for $i\geq j\geq0$ and arbitrary integer\ $h\geq0$, and
\begin{align}
&  \frac{\binom{i}{j}}{\binom{i-h}{j-h}}S_{2}\left(  i-h,j-h|1,\lambda
,-\left(  x+y\right)  \right)  =\sum_{k=j}^{i}\binom{i}{k}\beta_{i-k}^{\left(
h\right)  }(\lambda,x)S_{2}\left(  k,j|1,\lambda,-y\right)  ,\label{51}\\
&  \frac{\binom{i}{j}}{\binom{i-h}{j-h}}S_{1}\left(  i-h,j-h|1,\lambda
,-\left(  y-x\right)  \right)  =\sum_{k=j}^{i}\binom{i}{k}\alpha
_{i-k}^{\left(  h\right)  }(\lambda,x)S_{1}\left(  k,j|1,\lambda,-y\right)
,\label{5a1}\\
&  \binom{i}{h}S_{2}\left(  i-h,j-h|1,\lambda,-\left(  y-x\right)  \right)
\nonumber\\
&  \qquad\qquad=\sum_{k=j}^{i}\binom{k}{j-h}S_{2}\left(  i,k|1,\lambda
,-y\right)  S_{1}\left(  k-j+h,h|1,\lambda,-x\right)  ,\label{7a2}\\
&  \binom{i}{h}S_{1}\left(  i-h,j-h|1,\lambda,-\left(  y-x\right)  \right)
\nonumber\\
&  \qquad\qquad=\sum_{k=j}^{i}\binom{k}{j-h}S_{1}\left(  i,k|1,\lambda
,-y\right)  S_{2}\left(  k-j+h,h|1,\lambda,-x\right)  \label{72}%
\end{align}
for $i\geq j\geq h\geq0.$

By (\ref{8}), identities (\ref{s2s}) and (\ref{s1s}) coincide the "addition
theorems" (cf. \cite[Corollary 2]{17}), and (\ref{7a2}) and (\ref{72}) are
still valid for $S_{1}\left(  n,m|\mu,\lambda,x\right)  $ and $S_{2}\left(
n,m|\mu,\lambda,x\right)  .$

\begin{remark}
(\ref{51}) and (\ref{5a1}) reduce to \cite[Eqs. (3.2) and (5.3) ]{19} and
\cite[Theorems 12 and 14]{Broder} taking account of (\ref{11}) and (\ref{11a})
given later$.$ We also have identities that appear in \cite[Theorem 5.1]{A-D}
and \cite[Theorem 4.4 and their results]{23}$.$
\end{remark}

\begin{remark}
From Corollary \ref{cor bst1} we have identities given in \cite[Theorem
25]{Broder}, \cite[Eqs. (6.3) and (6.5)]{19} and\ \cite[Eq. (6.99)]{Knuth}
(and \cite[Corollary 3.3]{11})$.$ Additionally, by $\left(  s+r\right)
_{r}\left(  s\right)  _{k-r}=\left(  s+r\right)  _{k},$ (\ref{2}) gives
\cite[Eq. (2.89)]{25} for $x=r=j=h=m,$ $y=-s$ and $\lambda=0.$
\end{remark}

\begin{remark}
As a special case of Corollary \ref{cor bst2}, we have Carlitz's results
appearing in \cite[Eqs. (3.3), (3.4), (3.25), (5.2) and (5.8)]{19}.
Furthermore, \cite[Theorem 5]{24} is a special case of (\ref{71}) for
$\lambda=y=j=0$\ and $h=1.$
\end{remark}

\begin{remark}
(\ref{51}) gives \cite[Theorem 4.1]{3} for $h=j,$ $x=y=0$ and $i=m+j$.
\end{remark}

\begin{remark}
The identities given by \cite[Eqs. (6.15), (6.17), (6.21), (6.24), (6.25),
(6.28) and (6.29)]{Knuth} are special cases of (\ref{s2s}) for $h=y=0$ and
$x=1,$ (\ref{51}) for $h=0,$ $y=1$ and $x=-1$, (\ref{5a1}) for $h=y=0$ and
$x=-1$, (\ref{6}) for $h=y=0$ and $x=1$, (\ref{6a}) for $h=y=0$ and $x=1$,
(\ref{s2s}) for $x=y=0,$ (\ref{s1s}) for $x=y=0$ and in all cases $\lambda=0$, respectively.
\end{remark}

\begin{remark}
Broder noted that Nielsen "Traite Elementaire des Nombres de Bernoulli,
Gauthier-Villars, Paris, 1923, Ch. 12" developed a large number of formulae
relating $R_{2}(n,m,x)=S_{2}\left(  n,m|1,0,-x\right)  $\ to the Bernoulli and
Euler polynomials. (Nielsen's notation is $A_{m}^{n}\left(  x\right)
=m!R_{2}(n,m,x)$). This note reveals that identities follow from our results
for $\lambda=0$\ may be studied by Nielsen with probably different notations.
\end{remark}

\section{Applications of the results in Subsection 4.2}

By using the results presented in (\ref{s2s})-(\ref{72}) and Corollary
\ref{cor bst1} several identities for some related number sequences can be
deduced for the special values of $h$, $\mu$, $\lambda$, $x$ and $y,$ and from
the identities $S_{1}\left(  i,j|1,\lambda,-x\right)  =\left(  -1\right)
^{i-j}S_{1}\left(  i+1,j+1|\lambda\right)  $ and $S_{2}\left(  i,j|1,\lambda
,-x\right)  =S_{2}\left(  i+1,j+1|\lambda\right)  $ for $x=1-\lambda$. In
previous section we mentioned some of them. In this section we partly specify
identities given by (\ref{s2s})-(\ref{72}) and Corollary \ref{cor bst1}.

\subsection{Carlitz's weighted Stirling numbers}

In this part we give results involving Carlitz's weighted Stirling numbers and
Bernoulli polynomials. For $\lambda=0$, we have from (\ref{6})
\begin{align*}
\dbinom{i}{j}B_{i-j}(x)  &  =\sum\limits_{k=j}^{i}\frac{i}{k}R_{2}\left(
i-1,k-1,x\right)  S_{1}\left(  k,j\right)  ,\text{ for }h=1,\text{ }y=0,\\
B_{i}(x)  &  =\sum\limits_{k=0}^{i}\left(  -1\right)  ^{k}\frac{k!}{k+1}%
R_{2}\left(  i,k,x\right)  ,\text{ for }h=j=1,\text{ }y=0,
\end{align*}
from (\ref{6a})%
\begin{align*}
i!b_{i}\left(  -x\right)   &  =\sum\limits_{k=0}^{i}\frac{\left(  -1\right)
^{i-k}}{k+1}R_{1}\left(  i,k,x\right)  ,\text{ for }h=j=1,\text{ }y=0,\\
\left(  -1\right)  ^{i-1}\left(  i-1\right)  !  &  =\sum\limits_{k=0}%
^{i}\left(  k+1\right)  S_{1}\left(  i+1,k+1\right)  ,\text{ for
}h=-1,y=1,x=j=0,
\end{align*}
by (\ref{14a}) and%
\begin{equation}
S_{1}\left(  m,1|1,0,1\right)  =S_{1}\left(  m,1\right)  +mS_{1}\left(
m-1,1\right)  =\left\{
\begin{array}
[c]{ll}%
1, & m=1,\\
\left(  -1\right)  ^{m}\left(  m-2\right)  !, & m>1,
\end{array}
\right.  \label{17}%
\end{equation}
which can be held from (\ref{5a1}) for $\lambda=h=y=0,$ $j=1$ and $x=1.$

From (\ref{7a1})
\begin{align*}
\frac{i!}{\left(  i+h\right)  !}R_{2}\left(  i+h,j+h,x\right)   &  =\frac
{1}{\left(  j+h\right)  !}\sum_{k=j}^{i}S_{2}\left(  i,k\right)
b_{k-j}^{\left(  h\right)  }(x)k!,\text{ for }y=0,\\
\frac{\left(  x+1\right)  ^{i+1}-x^{i+1}}{i+1}  &  =\sum_{k=0}^{i}S_{2}\left(
i,k\right)  b_{k}(x)k!,\text{ for }h=1,j=y=0,
\end{align*}
and from (\ref{71})
\begin{align*}
\left(  -1\right)  ^{i-j}\frac{\binom{i}{j}}{\binom{i+h}{j+h}}R_{1}\left(
i+h,j+h,-x\right)   &  =\sum_{k=j}^{i}\binom{k}{j}S_{1}\left(  i,k\right)
B_{k-j}^{\left(  h\right)  }(x),\text{ for }y=0,\\
\frac{\binom{i}{j}}{\binom{i+h}{j+h}}S_{1}\left(  i+h+1,j+h+1\right)   &
=\sum_{k=j}^{i}\binom{k}{j}S_{1}\left(  i+1,k+1\right)  B_{k-j}^{\left(
h\right)  },\text{ for }y=1,x=0.
\end{align*}

\subsection{$r$-Stirling numbers}

Setting $\mu=-1$ in (\ref{8}) we have
\begin{align}
S_{1}\left(  m,k|1,-\lambda,-x\right)   &  =\left(  -1\right)  ^{m-k}%
S_{1}\left(  m,k|-1,\lambda,x\right)  ,\label{11}\\
S_{2}\left(  m,k|1,-\lambda,-x\right)   &  =\left(  -1\right)  ^{m-k}%
S_{2}\left(  m,k|-1,\lambda,x\right)  . \label{11a}%
\end{align}
So that
\[
\hspace{-0.3in}S_{1}\left(  m,k|1,0,-r\right)  =\left(  -1\right)  ^{m-k}%
\genfrac{[}{]}{0pt}{}{m+r}{k+r}%
_{r}\text{ and }S_{2}\left(  m,k|1,0,-r\right)  =%
\genfrac{\{}{\}}{0pt}{}{m+r}{k+r}%
_{r}%
\]
for $\lambda=0$ and nonnegative integer $x=r$\textbf{. }Then the results
presented in Subsection 4.2 can be specialized in terms of $r$-Stirling
numbers for $\lambda=0$ and integers $x$ and $y.$

In the first place note that
\[%
\genfrac{[}{]}{0pt}{}{m}{k}%
_{0}=%
\genfrac{[}{]}{0pt}{}{m}{k}%
_{1}=%
\genfrac{[}{]}{0pt}{}{m}{k}%
,\text{ \ \ \ }%
\genfrac{\{}{\}}{0pt}{}{m}{k}%
_{0}=%
\genfrac{\{}{\}}{0pt}{}{m}{k}%
_{1}=%
\genfrac{\{}{\}}{0pt}{}{m}{k}%
.
\]

The following are special cases of (\ref{72}) for $\lambda=0,$ nonnegative
integers $y=p,$ $y-x=r$ and $j\geq h:$
\begin{align}
&
\genfrac{[}{]}{0pt}{}{i+r}{j+r}%
_{r}=\sum_{k=j}^{i}\binom{k}{j}%
\genfrac{[}{]}{0pt}{}{i+p}{k+p}%
_{p}(r-p)^{k-j},\text{ for }h=0\text{,}\label{72-1}\\
&
\genfrac{[}{]}{0pt}{}{i+r}{j+r}%
_{r}=\frac{1}{i+1}\sum_{k=j}^{i}\binom{k+1}{j}%
\genfrac{[}{]}{0pt}{}{i+r}{k+r}%
_{r-1},\text{ for }h=1,\text{ }p=r-1,\nonumber\\
&  \binom{i}{j}\left\langle p+j\right\rangle _{i-j}=\sum_{k=j}^{i}%
\genfrac{[}{]}{0pt}{}{i+p}{k+p}%
_{p}%
\genfrac{\{}{\}}{0pt}{}{k}{j}%
,\text{ for }h=j,\text{ }r=p+j, \nonumber\label{72-2}%
\end{align}
where $\left\langle m\right\rangle _{k}=\left\{
\begin{array}
[c]{ll}%
m\left(  m+1\right)  \cdots\left(  m+k-1\right)  , & k>0,\\
1, & k=0
\end{array}
\right.  $ and we use that
\[
S_{2}\left(  k,m|1,0,m\right)  =\left(  -1\right)  ^{k-m}S_{2}\left(
k,m\right)  =\left(  -1\right)  ^{k-m}%
\genfrac{\{}{\}}{0pt}{}{k}{m}%
.
\]
Additionally, for $p=r-1$ and $p=r+1$ in (\ref{72-1}\textbf{) }we get
recurrences
\begin{align*}%
\genfrac{[}{]}{0pt}{}{i+r}{j+r}%
_{r}  &  =\sum_{k=j}^{i}\binom{k}{j}%
\genfrac{[}{]}{0pt}{}{i+r-1}{k+r-1}%
_{r-1},\\%
\genfrac{[}{]}{0pt}{}{i+r}{j+r}%
_{r}  &  =\sum_{k=j}^{i}\left(  -1\right)  ^{k-j}\binom{k}{j}%
\genfrac{[}{]}{0pt}{}{i+r+1}{k+r+1}%
_{r+1},
\end{align*}
respectively. The identities above reduce to \cite[Eq. (6.16) and
(6.18)]{Knuth} for $r=1$ and $r=0,$ respectively.

Some identities deduced from (\ref{7a2}),\ for $\lambda=0,$\ nonnegative
integers $y=p,$\ $y-x=r$\ and $j\geq h,$ are
\begin{align}
&
\genfrac{\{}{\}}{0pt}{}{i+r}{j+r}%
_{r}=\sum_{k=j}^{i}\binom{k}{j}%
\genfrac{\{}{\}}{0pt}{}{i+p}{k+p}%
_{p}\left(  r-p\right)  _{k-j},\text{ for }h=0,\nonumber\\
&
\genfrac{\{}{\}}{0pt}{}{i+p-1}{j+p-1}%
_{p-1}=\frac{1}{j!}\sum_{k=j}^{i}\left(  -1\right)  ^{k-j}%
\genfrac{\{}{\}}{0pt}{}{i+p}{k+p}%
_{p}k!,\text{ for }h=0,\text{ }r=p-1\label{7a2-1}\\
&
\genfrac{\{}{\}}{0pt}{}{i-1+r}{j-1+r}%
_{r}=\frac{1}{i\left(  j-1\right)  !}\sum_{k=j}^{i}\left(  -1\right)
^{k-j}\frac{k!}{k-j+1}%
\genfrac{\{}{\}}{0pt}{}{i+r}{k+r}%
_{r},\text{ for }h=1,\text{ }p=r\text{,}\label{7a2-2}\\
&
\genfrac{\{}{\}}{0pt}{}{i-1+r}{j-1+r}%
_{r}=\frac{1}{i\left(  j-1\right)  !}\sum_{k=j}^{i}\left(  -1\right)  ^{k-j}%
\genfrac{\{}{\}}{0pt}{}{i+r+1}{k+r+1}%
_{r+1}k!H_{k-j+1},\text{ for }h=1,\text{ }p=r+1\text{,}\nonumber
\end{align}
where $%
\genfrac{[}{]}{0pt}{}{m+1}{2}%
=m!H_{m}$ and
\[
H_{m}=1+\frac{1}{2}+\frac{1}{3}+\cdots+\frac{1}{m}.
\]
By (\ref{17}), we also have
\begin{equation}
\frac{i}{j}%
\genfrac{\{}{\}}{0pt}{}{i-1+r}{j-1+r}%
_{r}-%
\genfrac{\{}{\}}{0pt}{}{i+r-1}{j+r-1}%
_{r-1}=-\frac{1}{j!}\sum_{k=j+1}^{i}\frac{\left(  -1\right)  ^{k-j}%
\ k!}{\left(  k+1-j\right)  \left(  k-j\right)  }%
\genfrac{\{}{\}}{0pt}{}{i+r-1}{k+r-1}%
_{r-1} \label{7a2-3}%
\end{equation}
{\large \ }for $h=1$ and $p=r-1.$ Additionally, regular Stirling numbers of
the second kind satisfy%
\begin{align*}%
\genfrac{\{}{\}}{0pt}{}{i}{j}
&  =\frac{1}{j!}\sum_{k=j}^{i}\left(  -1\right)  ^{k-j}%
\genfrac{\{}{\}}{0pt}{}{i+1}{k+1}%
k!,\text{ for }j\geq0,\text{ }p=1\text{ in (\ref{7a2-1})}\\
&  =\frac{1}{i\left(  j-1\right)  !}\sum_{k=j}^{i}\left(  -1\right)
^{k-j}\frac{k!}{k-j+1}%
\genfrac{\{}{\}}{0pt}{}{i+1}{k+1}%
,\text{ for }j\geq1,\text{ }r=1\text{ in (\ref{7a2-2})}\\
&  =\frac{1}{\left(  j-i\right)  \left(  j-1\right)  !}\sum_{k=j+1}^{i}%
\frac{\left(  -1\right)  ^{k-j}k!}{\left(  k-j+1\right)  \left(  k-j\right)  }%
\genfrac{\{}{\}}{0pt}{}{i}{k}%
,\text{ for }j\geq1,\text{ }r=1\text{ in (\ref{7a2-3})}%
\end{align*}

The following are special cases of (\ref{6}) for $\lambda=0,$ nonnegative
integers $y=p,$ $x=r$ and $j\geq h:$%
\begin{align*}
\dbinom{i}{j}B_{i-j}(p)  &  =\sum\limits_{k=j}^{i}\left(  -1\right)
^{i-k}\frac{i}{k}%
\genfrac{\{}{\}}{0pt}{}{i}{k}%
\genfrac{[}{]}{0pt}{}{k+p}{j+p}%
_{p},\text{ for }h=1,\text{ }r=1,\\
B_{i}(r)  &  =\sum\limits_{k=0}^{i}\left(  -1\right)  ^{k}\frac{k!}{k+1}%
\genfrac{\{}{\}}{0pt}{}{i+r}{k+r}%
_{r},\text{ for }h=j=1,\text{ }p=0,\\
\dbinom{i+1}{j}  &  =\sum\limits_{k=j}^{i}\left(  -1\right)  ^{i-k}\left(
k+1\right)
\genfrac{\{}{\}}{0pt}{}{i+p}{k+p}%
_{p-1}%
\genfrac{[}{]}{0pt}{}{k+p}{j+p}%
_{p},\text{ for }h=-1,\text{ }r=p-1,\\
\dbinom{i+1}{j}  &  =\sum\limits_{k=j}^{i}\left(  -1\right)  ^{k-j}\left(
k+1\right)
\genfrac{\{}{\}}{0pt}{}{i+1+p}{k+1+p}%
_{p}%
\genfrac{[}{]}{0pt}{}{k+p}{j+p}%
_{p},\text{ for }h=-1,\text{ }r=p.
\end{align*}
Moreover, from (\ref{6}) for $\lambda=y=0,$ $h=j=1$ and $x=1-i,$ we have%
\[
B_{i}\left(  i+1\right)  =\sum\limits_{k=0}^{i}\frac{\left(  i-k\right)
!}{i-k+1}%
\genfrac{\{}{\}}{0pt}{}{i+k}{i}%
_{k}%
\]
by making use of $R_{2}\left(  m,k,-x\right)  =\left(  -1\right)  ^{m-k}%
R_{2}\left(  m,k,x-k\right)  $ and $B_{m}\left(  1-x\right)  =\left(
-1\right)  ^{m}B_{m}\left(  x\right)  .$

\subsection{Hyperharmonic numbers}

The hyperharmonic number of order $r$ denoted by $H_{m}^{r}$ is defined by%
\[
H_{m}^{r}=\sum\limits_{k=1}^{m}H_{k}^{r-1}%
\]
for \ $r,m\geq1,$ $H_{m}^{0}=\frac{1}{m}$ for $m\geq1,$ and $H_{m}^{r}=0$ for
$r<0$ or $m\leq0$ (\cite{26}).

A generating function for the hyperharmonic numbers is%
\[
-\left(  1-x\right)  ^{-r}\ln\left(  1-x\right)  =\sum\limits_{n=1}^{\infty
}H_{n}^{r}x^{n}.
\]
It follows from (\ref{stx1}) and (\ref{11}) that
\[
m!H_{m}^{r}=%
\genfrac{[}{]}{0pt}{}{m+r}{1+r}%
_{r}=S_{1}\left(  m,1|-1,0,r\right)  =\left(  -1\right)  ^{m-1}S_{1}\left(
m,1|1,0,-r\right)  .
\]
A combinatorial proof of this fact can be found in \cite[Theorem 2]{26}. Thus,
we have from (\ref{71}) \
\begin{align*}
i!H_{i+1}^{r}  &  =\sum_{k=0}^{i}\left(  -1\right)  ^{k}B_{k}%
\genfrac{[}{]}{0pt}{}{i+r}{k+r}%
_{r},\text{ for }h=1,\text{ }y=r,\text{ }x=j=\lambda=0,\\
i!H_{i+1}^{r-1}  &  =\sum_{k=0}^{i}B_{k}%
\genfrac{[}{]}{0pt}{}{i+r}{k+r}%
_{r},\text{ for }h=x=1,\text{ }y=r,\text{ }j=\lambda=0,\\
i!H_{i}^{r+p}  &  =\sum_{k=1}^{i}k%
\genfrac{[}{]}{0pt}{}{i+r}{k+r}%
_{r}p^{k-1},\text{ for }j=1,\text{ }y=r,\text{ }y-x=r+p\geq0,\text{ }%
h=\lambda=0,\\
i!H_{i}^{p}  &  =\sum_{k=1}^{i}k%
\genfrac{[}{]}{0pt}{}{i}{k}%
p^{k-1}%
\end{align*}
and from (\ref{5a1})%
\[
H_{i}^{r-m}=\sum_{k=1}^{i}\frac{\left(  -1\right)  ^{i-k}}{\left(  i-k\right)
!}\left(  m\right)  _{i-k}H_{k}^{r}%
\]
for $j=1,y=r,$ $y-x=r-m\geq0$ and $h=\lambda=0.$ This gives
\begin{align}
&  \sum_{k=1}^{i}\dbinom{i-k+p-1}{p-1}H_{k}^{r}=H_{i}^{p+r},\text{ for
}m=-p<0,\label{13}\\
&  \sum\limits_{k=1}^{i}kH_{k}^{r}=\left(  i+1\right)  !H_{i}^{r+1}%
-i!H_{i}^{r+2},\text{ for }p=2.\nonumber
\end{align}
(\ref{13}) is Eq. (7) of \cite{26} and thereby \cite[Theorem 1]{26} (see also
\cite[Theorem 5]{25a}) for $r=0$.

\subsection{Lah numbers}

For $\lambda=1$ and $x=0$ in (\ref{11}) and (\ref{11a}), we have
\[
\hspace{-0.3in}S_{1}\left(  m,k|1,-1,0\right)  =\left(  -1\right)
^{m-k}L\left(  m,k\right)  \text{ and }S_{2}\left(  m,k|1,-1,0\right)
=L\left(  m,k\right)  .
\]
Then, it follows from (\ref{6a}), (\ref{7a1}) and (\ref{7a2}) that%
\begin{align*}
&  \dbinom{i-j+m-1}{i-j}=\sum_{k=j}^{i}(-1)^{k-j}\binom{i+m-1}{k+m-1}%
\binom{k-1}{j-1},\text{ for }x=y=0,\text{ }h=-m<0,\\
&  \binom{i+h}{j+h}=\sum_{k=j}^{i}\binom{i}{k}\binom{h}{k-j},\text{ for
}x=y=0,\text{ }h\geq0,\\
&  \binom{i-h+1}{j-h+1}=\sum_{k=j}^{i}(-1)^{k-j}\binom{i+1}{k+1}%
\binom{k-j+h-1}{h-1},\text{ for }x=0,\text{ }y=1-\lambda,\text{ }j\geq h\geq1
\end{align*}
and $\lambda=-1,$ respectively.

In general, for arbitrary $x$ and $y$, we have
\begin{align*}
S_{2}\left(  m,j|1,-1,-x\right)   &  =(-1)^{m-j}S_{1}\left(
m,j|1,-1,-x\right)  =\binom{m}{j}\left\langle x+j\right\rangle _{m-j},\\
\beta_{m}^{\left(  h\right)  }(-1,x)  &  =\left(  -1\right)  ^{m}\alpha
_{m}^{\left(  h\right)  }(-1,-x)=\left\langle x-h\right\rangle _{m}.
\end{align*}
Then (\ref{6a}), (\ref{51}) and (\ref{7a2}) reduce to\textbf{\ }%
\begin{align*}
\left\langle x-h-y\right\rangle _{i-j}  &  =\sum_{k=0}^{i-j}\binom{i-j}%
{k}(-1)^{k}\left\langle y+j\right\rangle _{k}\left\langle x-h+j+k\right\rangle
_{i-j-k},\\
\left\langle y+j+x-h\right\rangle _{i-j}  &  =\sum_{k=0}^{i-j}\binom{i-j}%
{k}\left\langle y+j\right\rangle _{k}\left\langle x-h\right\rangle _{i-j-k},\\
\left\langle y+j-x-h\right\rangle _{i-j}  &  =\sum_{k=0}^{i-j}\binom{i-j}%
{k}(-1)^{k}\left\langle x+h\right\rangle _{k}\left\langle y+j+k\right\rangle
_{i-j-k},
\end{align*}
respectively.

\end{document}